\documentclass[a4paper,12pt]{amsart} 
\title{Generalized Minkowski weights and Chow rings of T-varieties}
\author{Ana Mar\'ia Botero}
\usepackage{amsmath, amssymb, mathrsfs, amsthm, geometry}
\usepackage{amsmath}
\usepackage{bbm}
\usepackage{latexsym}
\usepackage[english]{babel}
\usepackage{amsfonts}
\usepackage[T1]{fontenc}
\usepackage[utf8]{inputenc}
\usepackage{amsthm}
\usepackage{amssymb}
\usepackage{dsfont}
\usepackage{version}
\usepackage{caption}
\usepackage[backend=bibtex,style=alphabetic,hyperref=true,maxnames=9]{biblatex}
\usepackage[psamsfonts]{eucal}
\usepackage{stmaryrd}
\usepackage{url}
\usepackage{hyperref}
\usepackage{color}
\usepackage{blindtext}
\usepackage{tikz}
\usetikzlibrary{arrows,matrix,positioning}
\tikzstyle{dmatrix}=[matrix of math nodes,row sep=2.5em, column sep=2.5em,
text height=1.5ex, text depth=0.25ex] 
\usepackage{mathtools}
\usepackage{microtype}
\usepackage{graphicx}
\usepackage{mathabx}
\usepackage{lipsum}
\usepackage{float}
\usepackage{csquotes}
\usepackage{empheq}
\usepackage{enumitem}
\usepackage{mathrsfs}
\usepackage{mathtools}
\DeclareSymbolFont{epsilon}{OML}{ntxmi}{m}{it}
\DeclareMathSymbol{\epsilon}{\mathord}{epsilon}{"0F}

\theoremstyle{plain}
\newtheorem{theorem}{Theorem}[section]
\newtheorem{lemma}[theorem]{Lemma}
\newtheorem{prop}[theorem]{Proposition}
\newtheorem{cor}[theorem]{Corollary}

\newtheorem{prop/Def}[theorem]{Propsition/Definition}
\newtheorem{theorem/Def}[theorem]{Theorem/Definition}
\theoremstyle{definition}

\newtheorem{Def}[theorem]{Definition}
\newtheorem{rem}[theorem]{Remark}
\newtheorem{exa}[theorem]{Example}
\newtheorem{notation}[theorem]{Notation}

\def \H {{\mathbb H}}
\def \Q {{\mathbb Q}}
\def \R {{\mathbb R}}

\def \Z {{\mathbb Z}}
\def \C {{\mathbb C}}
\def \O {{\mathcal O}}
\def \P {{\mathbb P}}

\def \S {{\mathcal S}}
\def \rec {{\operatorname{rec}}}
\def \div {{\operatorname{div}}}
\def \SF {{\operatorname{SF}}}
\def \Hom {{\operatorname{Hom}}}
\def \CaSF {{\operatorname{CaSF}}}
\def \Spec {{ \operatorname{Spec}}}
\def \Eff {{ \operatorname{Eff}}}

\def \orb {{ \operatorname{orb}}}
\def \codim {{ \operatorname{codim}}}
\def \CaDiv {{ \operatorname{CaDiv}}}
\def \deg{{ \operatorname{deg}}}
\def \trop{{ \operatorname{trop}}}

\newcommand{\ca}[1]{{\mathcal{#1}}}
\newcommand{\on}[1]{{\operatorname{#1}}}

\renewbibmacro{in:}{}

\usepackage{tgpagella}
\usepackage{tgheros}
\usepackage{eulervm}
\usepackage{tikz,tikz-cd}
\usepackage{etoolbox}
\usepackage{subfigure}
\usetikzlibrary{arrows,matrix,positioning}
\tikzstyle{dmatrix}=[matrix of math nodes,row sep=2.5em, column sep=2.5em,
text height=1.5ex, text depth=0.25ex] 
\usepackage{color}

\numberwithin{equation}{section}

\bibliography{bibliography_update}

\date{}
\subjclass{14C25, 14L30, 14M25, 14C15, 14T90}
 \thanks{This work was partially supported by the Deutsche Forschungsgemeinschaft (DFG, German Research Foundation) –
Project-ID 491392403 – TRR 358.}
\begin{document}

\maketitle

{\small{\begin{abstract}
We give a combinatorial characterization of Fulton's operational Chow cohomology  groups of a complete , $\Q$-factorial, rational T-variety of complexity one in terms of so called generalized Minkowsky weights in the contraction-free case. We also describe the intersection product with Cartier invariant divisors in terms of the combinatorial data. In particular this provides a new way of computing top intersection numbers of invariant Cartier divisors combinatorially.
\end{abstract}}}

\tableofcontents

\section{Introduction}
Any algebraic variety $X$ has Chow ``homology'' groups $A_*(X) = \bigoplus_kA_k(X)$ and Chow ``cohomology'' groups $A^*(X) = \bigoplus_kA^k(X)$. The latter are the operational groups defined in \cite[Chapter 17]{fulint}. These cohomology groups have a natural graded ring structure, written with a cup product ``$\cup$'', and $A_*(X)$ is a module over $A^*(X)$, written with a cap product ``$\cap$''. 

If $X$ is complete, composing with the degree homomorphism from $A_0(X)$ to $\Z$ one has a a Kroenecker duality homomorphism 
\[
A^k (X) \longrightarrow \on{Hom}\left(A_k(X),\Z\right), \; z \longmapsto \left(a\mapsto \on{deg}(z \cap a)\right).
\]
In general, this map is not an isomorphism.

However, we will see that for a large class of T-varieties of complexity one, the Kroenecker duality homomorphism is indeed an isomorphism, at least after tensoring with $\Q$. This will follow from Poincaré duality and a combinatorial property related to the T-variety called shellability (see Proposition \ref{prop:kron-dual}). 

We now briefly recall the definition and combinatorial characterization of T-varieties. We refer to \cite{AIPSV}, \cite{AH}, \cite{AHS} and \cite{PS} for further details on general T-varieties, and to \cite{LLM}, \cite{IS} and \cite{UN} for the case of complexity one.


A T-variety is a normal algebraic variety $X$ defined over an algebraically closed field of characteristic zero with an effective action of an algebraic torus T. The complexity of a T-variety is defined as $\dim X -\dim T$. Hence T-varieties of complexity zero correspond exactly to toric varieties. As for toric varieties, one can encode T-varieties of arbitrary complexity in terms of some combinatorial data. For this, one uses the language of p-divisors and divisorial fans (see Section~\ref{sec:prel-t-var} for details). In the case of complexity one, one even has a simpler description in terms of so called fansy divisors. This case is in some ways close to the case of toric varieties. For example, one has a characterization of invariant Cartier divisors in terms of so called divisorial Cartier support functions (see Section~\ref{sec:cartier-div}). These are collections of piecewise-affine functions defined on the slices of the divisorial fan sharing the same recession function. Such a characterization is not known in higher complexity. This is one of the reasons why we decided to restrict to the complexity-one case in the present article. 

In order to state our results, let $X$ be a complete, rational T-variety of complexity one with divisorial fan $\S$ and recession fan $\Sigma$. We can think of $\S$ as a collection of complete rational polyhedral complexes $(\S_p)_{p \in \P^1}$, all having recession fan $\Sigma$, and $\S_p \neq \Sigma$ only for finitely many $p \in \P^1$. The $\S_p$ are called the slices of $\S$. Let $P$ be the set of points such that $\S_p \neq \Sigma$. We assume that $|P|\geq 2$. 

It has been shown in \cite{UN} that the Chow groups $A_k(X)$ are generated by invariant subvarieties corresponding to elements in the complexes $\S_p$ for $p \in P$, and in $\Sigma$. Roughly, polyhedra in the $\S_p$ correspond to vertical cycles, cones in $\Sigma$ correspond to horizontal cycles, and one has to keep track of which subvarieties are contracted along the map $r \colon \tilde{X} \to X$ (see Section \ref{sec:prel-t-var} for the definition of the map $r$). Relations are given by the divisors of invariant rational functions on subvarieties of dimension $k+1$. This gives an explicit presentation of the Chow groups $A_k(X)$ by an exact sequence (see Theorem~\ref{th:chow-exact}). 

It now follows from the Kroenecker duality map, that one can associate to a given Chow cohomology class a certain function on the set of polyhedra of the $\S_p$'s and the set of cones in $\Sigma$. We call these functions \emph{generalized Minkowski weights} (see Definition \ref{def:gen-mink}). The name is inspired by \cite{FS}, where the authors study the operational Chow cohomology rings of complete toric varieties and show that they can be identified with so called \emph{Minkowski weights}, which are certain functions on the set of cones of the fan defining the toric variety. This association is bijective whenever the Kroenecker duality map is an isomorphism.

We say that $X$ is contraction-free if $\tilde{X} \simeq X$. In this case, and under some other mild assumptions, the Chow cohomology groups are generated in codimension one, and a Stanley--Reisner type presentation of the Chow groups is given in \cite[Theorem 4]{LLM}. We use this description together with Poincaré duality to show that in this case,  the Kroenecker duality map is an isomorphism after tensoring with $\Q$.

The above is summarized in the following theorem, which is our first main result. 
\begin{theorem}\label{th:intro-trop}(Theorem \ref{thm:trop})
Let $X$ be a complete rational T-variety of complexity one. Then there is a tropicalization map
\[
\on{trop}\colon  A^k(X) \longrightarrow M_k(X)
\]
  between the Chow cohomology groups $A^k(X)$ and the group of $k$-codimensional generalized Minkowski weights $M_k(X)$. If $X$ is projective, $\Q$-factorial and contraction-free, then the induced tropicalization map 
 \[
\on{trop}\colon  A^k(X)_{\Q} \longrightarrow M_k(X)_{\Q},
\] 
where $M_k(X)_{\Q}$ denotes $\Q$-valued generalized Minkowski weights, is a bijection.
\end{theorem}

Let us comment on the above assumptions. $\Q$-factoriality is needed for Poincaré duality to hold and, as the authors in \cite{LLM} remark, their contraction-free assumption is necessary for their result, as they are using that $A^k(X)$ is generated by divisors, a fact which is no longer true in the non-contraction-free case. This is why we need to impose all of these conditions in the above theorem. We however expect that the Kronecker duality map is still an isomorphism also in the not necessarily contraction-free case. 

For the rest of the introduction, assume that $X$ is projective, $\Q$-factorial and contraction-free.  The ring structure on $A^*(X)_{\Q}$ makes the group of generalized Minkowski weights $M_*(X)_{\Q} = \bigoplus_kM_k(X)_{\Q}$ into a commutative ring. The second main result of this article is a combinatorial description of the intersection pairing between Chow cohomology classes and T-invariant Cartier divisors. 

Before we state our result, as was mentioned above, a T-Cartier divisor on $X$ corresponds to a divisorial Cartier support function $h$ on $\S$ (Definition \ref{def:div-car-fun}). We denote by $\on{CaSF}(\S)$ the set of divisorial Cartier support functions on $\S$. In Section~\ref{sec:intersect} we define in the contraction-free case an intersection pairing 
\begin{align}\label{eq:intro-pairing}
\on{CaSF}(\S) \times M_k(X) \longrightarrow M_{k+1}(X),\; (h,c) \longmapsto h \cdot c.
\end{align}
We can think of this pairing as an analogue of the so called \emph{corner locus} of a tropical cycle in tropical geometry.
Our second main result states that this pairing is compatible with the isomorphism $\on{trop}$ of above and hence recovers the algebraic intersection product. 
\begin{theorem}\label{th:intro2}(Theorem \ref{th:comp})
Assume that $X$ is projective, $\Q$-factorial and contraction-free. Let $h \in \CaSF(\S)$ and let $D_h$ its associated Cartier divisor. Set $[D_h] \in A^1(X)_{\Q}$ for its corresponding cohomology class. Then for any $z \in A^k(X)_{\Q}$ we have 
\[
\on{trop}\left([D_h] \cup z \right) = h \cdot \on{trop}(z)
\]
as generalized Minkowski weights in $M_{k+1}(X)_{\Q}$.
\end{theorem}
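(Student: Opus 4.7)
The plan is to exploit the isomorphism from Theorem~\ref{th:intro-trop}: since $\on{trop}$ is an isomorphism, the two elements $\on{trop}([D_h]\cup z)$ and $h\cdot \on{trop}(z)$ of $M_{k+1}(X)$ coincide if and only if they agree on every polyhedron or cone $\sigma$ indexing a generator of the corresponding homology group. By the very definition of $\on{trop}$, this reduces the statement to the pointwise identity
\[
\deg\!\bigl(([D_h]\cup z)\cap [V(\sigma)]\bigr) \;=\; \bigl(h\cdot \on{trop}(z)\bigr)(\sigma)
\]
for each such $\sigma$, where $V(\sigma)$ is the associated invariant subvariety.

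Using commutativity and associativity of the cap action of $A^*(X)$ on $A_*(X)$, I would rewrite the left hand side as $\deg\!\bigl(z\cap([D_h]\cap [V(\sigma)])\bigr)$. The core of the argument is then a local calculation of $[D_h]\cap [V(\sigma)]\in A_*(X)$ in terms of the combinatorial data. Picking a vertex of $\sigma$ and a local linearization of $h$ there by a character $\chi^m$ yields a representative of $[D_h]$ that meets $V(\sigma)$ properly along invariant divisors. Under the contraction-free assumption, the rational function $\chi^m$ restricts nontrivially to $V(\sigma)$ and its orders of vanishing along each invariant divisor $V(\tau)\subset V(\sigma)$ are directly controlled by the slopes of $h$. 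This gives a decomposition
\[
[D_h]\cap [V(\sigma)] \;=\; \sum_{\tau} \alpha_\tau\, [V(\tau)],
\]
summed over the $\tau$ indexing invariant divisors of $V(\sigma)$ (coming either from codimension-one supercones of $\sigma$ in the recession fan $\Sigma$ or from codimension-one faces in the slices $\S_p$), with integer coefficients $\alpha_\tau$ equal to the jumps of $h$ between $\sigma$ and $\tau$. This step is a direct extension of the toric computation of \cite{FS} to the complexity-one setting.

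Substituting this identity back and using $c(\tau)=\deg\!\bigl(z\cap [V(\tau)]\bigr)$ by definition of $c=\on{trop}(z)$, the left hand side of the reduced identity becomes $\sum_\tau \alpha_\tau\, c(\tau)$. The final step is to recognize this sum as the value $(h\cdot c)(\sigma)$ of the combinatorial pairing from Section~\ref{sec:intersect}; this will essentially be a matter of unwinding its definition as a tropical corner locus, which prescribes the value at $\sigma$ as exactly such a slope-weighted sum of the values of $c$ on codimension-one adjacent faces.

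The main obstacle I anticipate is the consistent bookkeeping at the interface between vertical cycles, indexed by polyhedra in the slices $\S_p$, and horizontal cycles, indexed by cones in $\Sigma$: the sum $\sum_\tau \alpha_\tau\, [V(\tau)]$ typically contains both types of contributions, and one must verify that the algebraic slope coefficients match those built into the combinatorial definition of $h\cdot c$ in both cases simultaneously. One must moreover check that the resulting formula is well defined modulo the relations from Theorem~\ref{th:chow-exact}, that is, that divisors of invariant rational functions on $(k+1)$-dimensional invariant subvarieties map to the zero weight on both sides. The contraction-free hypothesis is exactly what makes the local calculation go through uniformly, and it is its translation into the divisorial-fan picture—ensuring that $\chi^m$ is never pulled back to the zero section on any $V(\tau)$—that I expect to drive the proof.
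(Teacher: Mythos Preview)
Your proposal is correct and follows essentially the same route as the paper: reduce to the pointwise identity on each generator $V(\sigma)$, compute $[D_h]\cap[V(\sigma)]$ explicitly as an integer combination of invariant divisors of $V(\sigma)$, and match the coefficients against the definition of the combinatorial pairing. The paper organizes this computation via the restriction operators $h(\tau)$ and $h_{p,F}$ of Definition~\ref{def:restriction-div} together with Lemma~\ref{lem:restriction}, which cleanly separates the horizontal ($\tau\in R_{k+1}$) and vertical ($F\in V_{k+1}$) cases and thereby dissolves the bookkeeping obstacle you anticipated; your worry about well-definedness modulo the relations of Theorem~\ref{th:chow-exact} is unnecessary, since both sides are already known to lie in $M_{k+1}(X)$.
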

We expect that the pairing \eqref{eq:intro-pairing} generalizes to the non-contraction-free case such that Theorem \ref{th:intro2} continues to hold. This will be part of a future project.

Now, to any invariant Cartier divisor $D_h$ we associate a measure $\mu_h$ on a real vector space $\on{Tot}(\S)$ by means of the pairing \eqref{eq:intro-pairing}(see Definition \ref{def:measure}). It is supported on the vertices of the polyhedral complexes $\S_p$ and the rays of $\Sigma$. Moreover, $\mu_h$ is positive if $D_h$ is nef.

For any integer $\ell$ and any $p\in P$ we denote by $\S_p(\ell)$ and by $\Sigma(\ell)$ the set of $\ell$-dimensional polyhedra in $\S_p$ and the set of $\ell$-dimensional cones in $\Sigma$, respectively.

As an application of Theorem \ref{th:intro2} we obtain a way to compute the top intersection number of $D_h$ as an integral with respect to the measure $\mu_h$. 
\begin{cor}(Corollary \ref{cor:top-int})
Assume that $X$ is projective, $\Q$-factorial and contraction-free and let $\dim(X)=n+1$. Let $D_h$ be an invariant Cartier divisor associated to a Cartier divisorial support function $h$ with recession function $\rec(h) = \overline{h}$, and let $\mu_h$ be the associated measure. 
Then 
\[
D_h^{n+1} =\int_{\on{Tot}(\S)}-h\mu_h = \sum_{p \in P}\sum_{F \in \S_p(0)}-h(v_F)\mu_h(F) \sum_{\tau \in \Sigma(1)}-\overline{h}(v_{\tau})\mu_h(\tau),
\]
where for $F \in \S_p(0)$,  $v_F = F$, and for $\tau \in \Sigma(1)$, $v_{\tau}$ denotes the primitive vector spanning the ray $\tau$. 
\end{cor}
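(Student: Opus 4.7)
The plan is to translate the algebraic top intersection number $D_h^{n+1}$ into a purely combinatorial computation via Theorem \ref{th:intro-trop}, and then to iterate Theorem \ref{th:intro2} until all cup products with $[D_h]$ have been rewritten as applications of the pairing \eqref{eq:intro-pairing} with $h$.

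Since $X$ is complete, the degree map identifies $A_0(X)\cong\Z$, and Kroenecker duality together with Theorem \ref{th:intro-trop} identifies $A^{n+1}(X) \cong M_{n+1}(X)\cong\Z$, a top generalized Minkowski weight being sent to its total value. In particular
\[
D_h^{n+1} \;=\; \deg\bigl([D_h]^{n+1}\cap[X]\bigr)
\]
is the total value of $\on{trop}([D_h]^{n+1})$. Applying Theorem \ref{th:intro2} iteratively, starting from the fundamental cohomology class $1_X\in A^0(X)$ whose tropicalization is the constant weight $1\in M_0(X)$, gives
\[
\on{trop}\bigl([D_h]^{n+1}\bigr) \;=\; h\cdot\bigl(h\cdot(\cdots(h\cdot 1))\bigr),
\]
with $n+1$ applications of the pairing \eqref{eq:intro-pairing}. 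By Definition \ref{def:measure} the measure $\mu_h$ on $\on{Tot}(\S)$ encodes the intermediate weight obtained after the first $n$ of these applications; hence $\on{trop}([D_h]^{n+1}) = h\cdot\mu_h$.

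The remaining step is to evaluate this final pairing $h\cdot\mu_h$ explicitly using the local form of \eqref{eq:intro-pairing}. Since $\mu_h$ is supported on $\bigcup_{p\in P}\S_p(0)\cup\Sigma(1)$, the top weight $h\cdot\mu_h$ is a sum of local contributions: at a vertex $F\in\S_p(0)$ the contribution is $-h(v_F)\,\mu_h(F)$, and at a ray $\tau\in\Sigma(1)$ the contribution is $-\overline h(v_\tau)\,\mu_h(\tau)$. The recession $\overline h=\rec(h)$ appears at the rays because the horizontal directions of $\S$ are governed by $\Sigma$ via recession; the minus signs come from the standard convention relating a Cartier (support) function to its associated divisor, familiar already from the toric setting. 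Summing these contributions and recognizing the sum as the integral $\int_{\on{Tot}(\S)}-h\,\mu_h$ yields the claimed formula.

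The main obstacle is justifying the local-formula description of $h\cdot\mu_h$ on vertex- and ray-supported weights, that is, matching the corner-locus operation encoded in \eqref{eq:intro-pairing} with the pointwise evaluations of $h$ at $v_F$ and of $\overline h$ at $v_\tau$. This is where the contraction-free hypothesis intervenes: it is what allows \eqref{eq:intro-pairing} to be defined in the first place, and it ensures that the vertical contributions from the various $\S_p(0)$ and the horizontal contributions from $\Sigma(1)$ decouple cleanly without double counting.
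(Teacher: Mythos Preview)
Your approach is essentially the same as the paper's: iterate Theorem \ref{th:intro2} (equivalently Theorem \ref{th:comp}) to convert $D_h^{n+1}$ into $h^{n+1}\cdot c_{[X]}$, and then read off the formula from the definition of the pairing at $\{0\}\in R_{n+1}$ together with Definition \ref{def:measure}. The paper's own proof is a two-line appeal to exactly these two ingredients, so you are simply unpacking what the paper leaves implicit.

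One imprecision worth fixing: you write ``$\on{trop}([D_h]^{n+1}) = h\cdot\mu_h$'', but $\mu_h$ is not literally the weight $h^n\cdot c_{[X]}\in M_n(X)$ to which the pairing \eqref{eq:intro-pairing} applies; by Definition \ref{def:measure} it differs from that weight by the factor $\mu(v_F)$ on each vertex $F$. The correct statement is that the top value equals $(h\cdot(h^n\cdot c_{[X]}))(\{0\})$, and when you expand this using the explicit formula for the pairing at $\tau=\{0\}$ the $\mu(v_F)$ factors appear and combine with $(h^n\cdot c_{[X]})(F)$ to give $\mu_h(F)$. You should also note explicitly that the correction term $\overline{h}_\tau(\cdots)$ in the pairing vanishes here because $\overline{h}_{\{0\}}=0$; this is what makes the final formula a clean sum of $-h(v_F)\mu_h(F)$ and $-\overline{h}(v_\tau)\mu_h(\tau)$ without a residual balancing term.
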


This article is a first step towards a convex-geometrical description of the intersection theory of nef b-divisors (b stands for birational) on complexity-one T-varieties in the spirit of \cite{BoteroBurgos}. Indeed, we expect that nef b-divisors on such a T-variety can be characterized in terms of some ``concave'' functions on the divisorial fan. We then expect to associate a Monge--Amp\`ere type measure to such a function, as a weak limit of measures of the form $\mu_h$ from above. We plan to pursue these ideas in the future.  

Finally, in \cite{AP} and \cite{AGP} the authors study equivariant operational K-theory and Grothendieck transformations from bivariant operational K-theory to Chow.  These results are later used in \cite{Shah}, where the author describes the operational K-theory of complete toric varieties in terms of so called Grothendieck weights (a K-theoretic analog of the Minkowski weights) and gives a product formula in terms of these weights. It would be interesting to try to characterize the operational K-theory of complete T-varieties of complexity one in terms of some generalized Grothendieck weights, a K-theoretic analog of the generalized Minkowski weights introduced in the present article. 

\subsection{Outline of the paper}
In section \ref{sec:prel-t-var} we recall some basic facts of T-varieties and the combinatorial framework to describe them. In particular, we recall the definitions of p-divisors and of divisorial fans. In Section \ref{sec:subvarieties} we restrict to the complexity one case and we describe the invariant subvarieties induced by the polyhedra in the slices of the divisorial fan. It turns out that these invariant subvarieties generate the pseudoeffective cone of the T-variety and we give a list of its generators (Theorem \ref{th:eff_cone}). For this we mainly follow \cite{UN}. Then we recall the classification of T-invariant Cartier divisors in terms of Cartier divisorial support functions, following the work in \cite{AIPSV} and \cite{PS}.

In section \ref{sec:ch} we recall the definition of the Chow homology and cohomology groups of an algebraic variety.  In Section \ref{sec:chow} we show that the Kroenecker duality map is an isomorphism for projective, rational, $\Q$-factorial, contraction-free T-varieties after tensoring with $\Q$ (see Proposition \ref{prop:kron-dual}). Then, following \cite{UN}, we give in the general, not necessarily contraction-free case, an explicit presentation of $A_k(X)$ in terms of an exact sequence.
In section \ref{sec:gen-mink} we prove our first main result. We give the definition of the groups of generalized Minkowski weights and we show that they can be identified with the Chow cohomology groups in the projective, $\Q$-factorial, contraction-free case. This yields Theorem \ref{th:intro-trop}.

In Section \ref{sec:intersect} we prove our second main result. We restrict to the $\Q$-factorial, contraction free case and define an intersection pairing between Cartier divisorial support functions and generalized Minkowski weights. We show that this pairing is compatible with the tropicalization isomorphism from Theorem \ref{th:intro-trop}). This gives Theorem~\ref{th:intro2}.

In Section \ref{sec:measure} we associate a measure $\mu_h$ to any Cartier divisorial support function $h$ using the intersection pairing. As an application of Theorem \ref{th:intro2} we obtain a way of computing top intersection numbers of invariant Cartier divisors in terms of this measure. We get Corollary \ref{cor:top-int}. We end the article with an example (see Example \ref{exa:top}).\\

\noindent
\textbf{Conventions:} $k$ denotes an algebraically closed field of characteristic $0$. A variety means an integral, separated scheme of finite type over $k$.  Varieties are assumed to be reduced and irreducible. A subvariety is a closed subscheme which is a variety.
\vspace{0.5cm}\\

\noindent
\emph{Acknowledgments:} We thank an anonymous referee for their careful reading of this manuscript and their many
insightful comments and suggestions.

\section{Preliminaries on T-varieties}\label{sec:prel-t-var}
We use standard notation from toric geometry. Let $N$ be a lattice of dimension $n$, $M = N^{\vee}$ its dual lattice and $T = \on{Spec}(N) \simeq (k^*)^n$ the $n$-dimensional torus. For any ring $R$ we put $N_{R}= N \otimes_{\Z} R$ and $M_{R}= M \otimes_{\Z}R$. 
\begin{Def}
A \emph{$T$-variety} is a normal variety $X$ together with an effective algebraic torus action $T \times X \to X$. Its \emph{complexity} is defined as $\dim X- n$. 
\end{Def}
Thus T-varieties of complexity zero correspond to toric varieties. 

We recall the general framework for describing $T$-varieties of any complexity. For details we refer to \cite{AH}, \cite{AHS} and \cite{AIPSV}. 

We use notations and basic facts regarding convex cones and polyhedra from \cite{ROCK}. In particular, for any polyhedron $\Lambda \in N_{\Q}$, its \emph{recession cone} $\rec(\Lambda)$ is the set of
direction vectors of all rays contained in $\Lambda$
\[
\rec(\Lambda) \coloneqq \left\{v \in N_{\Q} \; \colon \; v + \Lambda \subseteq \Lambda\right\}.
\]
Fix a polyhedral cone $\sigma \subseteq N_{\Q}$. Consider the semigroup (under Minkowski addition) 
\[
\operatorname{Pol}_{\Q}^+(N, \sigma) = \left(\Lambda \subseteq N_{\Q} \;\colon \; \Lambda \text{ is a polyhedron with }\rec(\Lambda) = \sigma \right\}. 
\]
We also allow $\emptyset \in \operatorname{Pol}_{\Q}^+(N, \sigma)$. 

Let $Y$ be a normal variety. We denote by $\CaDiv(Y)$ the group of Cartier divisors on $Y$ and by $\CaDiv_{\Q}(Y)$ the group of $\Q$-Cartier divisors on $Y$. A \emph{polyhedral divisor on} $(Y,N)$ is a formal sum 
\[
\ca D = \sum_{Z}\Lambda_Z \cdot Z,
\]
for $\Lambda_Z \in \operatorname{Pol}_{\Q}^+(N, \sigma)$ and $Z \in \CaDiv_{\Q}(Y)$,  such that only finitely many $\Lambda_Z$ differ from $\sigma$. For a polyhedral divisor $\ca D$ consider the evaluation 
\[
\ca D(u) = \sum_{Z \text{ s.~t. }\Lambda_Z \neq \emptyset} \min\langle\Lambda_Z, u\rangle Z \in \CaDiv_{\Q}(Y)
\]
for $u \in \sigma^{\vee} \cap M$. 
This is a finite sum. 

The polyhedral divisor $\ca D$ on $(Y,N)$ is called a \emph{$p$-divisor} if $\ca D(u)$ is semiample for all $u \in  \sigma^{\vee} \cap M$, as well as big for $u \in \on{relint}(\sigma^{\vee}) \cap M$. 

To a $p$-divisor $\ca D$ one associates a sheaf of rings \[
\ca O_Y(\ca D) = \bigoplus_{u \in \sigma^{\vee}\cap M}\ca O_Y(\ca D(u)).
\]
Then $X = \Spec \;\Gamma \left(Y, \ca O_Y(\ca D)\right)$ is an affine $T$-variety of complexity $\dim Y$. Also the relative spectrum $\tilde{X} = \Spec_Y \ca O_Y(\ca D)$ is a $T$-variety of complexity $\dim Y$ and there is an equivariant map $r \colon \tilde{X} \to X$. The variety $X$ is said the be \emph{contraction-free} if $\tilde{X} = X$.

Altman and Hausen show that any affine $T$-variety arises from a $p$-divisor in this way \cite[Theorem 3.4]{AH}. 

Given two $p$-divisors $\ca D$ and $\ca D'$ on $(Y,N)$ we write $\ca D' \subseteq \ca D$ if each coefficient of $\ca D'$ is contained in the corresponding coefficient of $\ca D$. In this case, we have a map $X(\ca D') \to X(\ca D)$, and $\ca D'$ is said to be a \emph{face} of $\ca D$ if this map is an open embedding. 

The intersection $\ca D \cap \ca D'$ of two polyhedral divisors is the polyhedral divisor $\sum_Z \left(\Lambda_Z \cap \Lambda'_Z\right)\cdot Z$. 
\begin{Def}
A \emph{divisorial fan} $\ca S$ is a finite set of $p$-divisors on $(Y,N)$ such that the intersection of any two $p$-divisors of $\S$ is a face of both and $\S$ is closed under taking intersections. 
\end{Def}
We denote by $X(\S)$ the scheme obtained by gluing the affine $T$-varieties $X(\ca D)$ and $X(\ca D')$ along the open subvarieties $X(\ca D \cap \ca D')$ for each $\ca D$, $\ca D' \in \S$. As in the toric case, $X(\S)$ turns out to be a $T$-variety of complexity $\dim Y$. 

Analogously, define the variety $\tilde{X}(\S)$ by gluing the affine $T$-varieties $\tilde{X}(\ca D)$ and $\tilde{X}(\ca D')$ along the open subvarieties $\tilde{X}(\ca D \cap \ca D')$ for each $\ca D$, $\ca D' \in \S$.

We have an induced proper birational morphism 
\[
r \colon \tilde{X}(\S) \longrightarrow X(\S)
\]
 and $X(\S)$ is said to be \emph{contraction-free} if $r = \on{id}$. The set 
 \[
 \Sigma(\S) = \left\{\rec(\ca D) \; | \; \ca D \in \S\right\}
 \]
 is the \emph{recession fan} of $\S$, where $\rec(\ca D)$ stands for the common recession cone of the coefficients of $\ca D$. 
 
 Altmann, Hausen and Süss show that any $T$-variety arises from a divisorial fan in this way \cite[Theorem 5.6]{AHS}. 
 
The \emph{degree} of a $p$-divisor $\ca D$, denoted $\deg (\ca D)$, is the Minkowski sum $\sum_Z\Lambda_Z$ of its coefficients. 
 
 We now specialize to the case where $Y$ is a smooth projective curve.
\begin{Def}
 A \emph{marked fansy divisor on $Y$} is a formal sum 
 \[
 \Xi = \sum_{p \in Y} \Xi_P \otimes [p],
 \]
 together with a complete fan $\Sigma$ in $N_{\Q}$ and a subset $K \subseteq \Sigma$ such that \begin{enumerate}
 \item for all $p \in Y$, the coefficient $\Xi_p$ is a complete polyhedral subdivision of $N_{\Q}$ with $\rec (\Xi_p) = \left\{\rec(\Lambda_p)\; | \; \Lambda_p \in \Xi_p \right\} = \Sigma$.
 \item for a cone $\sigma \in K$ of full dimension, the element $\ca D^{\sigma} = \sum_p \ca D_p^{\sigma} \otimes [p]$ is a $p$-divisor, where $\ca D_p^{\sigma}$ denotes the unique polyhedron in $\Xi_p$ whose recession cone is equal to $\sigma$. 
 \item for a full-dimensional cone $\sigma \in K$ and a face $\tau \prec \sigma$ we have that $\tau \in K$ if and only if $\deg(\ca D^{\sigma})\cap \tau \neq \emptyset$. 
 \item if $\tau$ is a face of $\sigma$ then $\tau \in K$ implies that $\sigma \in K$. 
 \end{enumerate}
 The elements of $K \subseteq \Sigma$ are called \emph{marked cones}.
 \end{Def}
 It is shown in \cite[Proposition 1.6]{IS} that any complete $T$-variety of complexity one corresponds to a marked fansy divisor. Given a divisorial fan $\S$ one associates a marked fansy divisor $\Xi(\ca S)$ by taking the polyhedral subdivisions $\S_p$ given by $\S$ and letting $K$ consist of all cones $\sigma \in \Sigma$ such that there exists $\ca D \in \S$ with $\rec (\ca D) = \sigma$ and such that no coefficient of $\ca D$ equals $\emptyset$. The variety $\tilde{X}$ is given as a marked fansy divisor by the same subdivisions as for $X$ but with $K = \emptyset$. Thus the cones in $K$ capture the information of which orbits are identified via the contraction map $r$. 
 
 Intuitively, one can think of a $T$-variety of complexity one in the following way. For a point $p \in Y$, the fiber over $p$ corresponds to a union of toric varieties corresponding to the polyhedral subdivision $\S_p$. The polyhedral subdivision at a general fiber is $\Sigma = \rec (\S)$ and hence the generic fiber is the toric variety corresponding to $\Sigma$.

\section{Subvarieties of T-varieties of complexity one}\label{sec:subvarieties}
Let $X$ be a $(n+1)$-dimensional complete, rational T-variety of complexity one with divisorial fan $\S$ over $Y= \P^1$, recession fan $\Sigma$ in $N_{\Q}$ and subset $K \subseteq \Sigma$. Assume that $K$ is a proper subset of $\Sigma$, i.e. $K \neq \Sigma$. 

For $p \in \P^1$ we write $\S_p$ for the corresponding complete polyhedral subdivision of $N_{\Q}$. Let $P$ be the set of points of $\P^1$ such that $\S_p \neq \Sigma$. $P$ is a finite set. We assume that $P$ has size at least two. 

\begin{notation}\label{notation} We use standard notation in toric and tropical geometry. For a cone $\sigma \in \Sigma$ we set $N_{\sigma}= N\cap \on{span}(\sigma)$ and $N(\sigma) = N/N_{\sigma}$. For any integer $k \geq 0$ we denote by $\Sigma(k)$ the set of cones of $\Sigma$ of dimension $k$. Also for a polyhedral subdivision $\S_p$ we denote by $\S_p(k)$ the set of $k$-dimensional polyhedra in $\S_p$. If $\tau \in \Sigma(k)$ is a cone. Then for every cone $\sigma\in \Sigma(k+1)$ with $\tau \prec \sigma$ we denote by $v_{\tau,\sigma}$ the \emph{lattice normal vector of $\sigma$ relative to $\tau$}. This is the image in $N(\tau)$ of the unique generator of $N_{\sigma}/N_{\tau}$ that points in the direction of $\sigma$. If $k=0$ we also write $v_{\sigma}$ for $v_{\{0\},\sigma}$. 

For $F \in \S_p$ denote by $N_{F,\R}$ the linear space spanned by all $x - y$ for $x, y \in \sigma$. Further, denote by $N_F = N_{F,\R} \cap N$ the induced sublattice and by $N(F)$ the quotient $N/N_F$.  If $F \in \S_p(k)$ then for every $G \in \S_p(k +1)$ denote by $v_{F,G}$ the \emph{lattice normal vector of $G$ relative to $F$}. This is the image in $N(F)$ of the unique generator of $N_G/N_F$ that points in the direction of $G$. 

Be aware that the notation $v_F$ for $F \in \S_p$ is given below in Notation \ref{not:vertex}.


\end{notation}
On $\tilde{X}$ we have various subvarieties arising from the combinatorial structure. We briefly recall the description of these here from \cite[Section 3]{UN} (see also \cite{HS-cox}).
\begin{itemize}
\item For $p \in \P^1$ and a polyhedron $F \in \S_p$, there is a T-orbit $\orb(p,F) \subseteq \widetilde{X}$ of dimension $\codim(F)$. The closure of this orbit in $ \widetilde{X}$ is denoted $Z_{p,F}$. It is an invariant subvariety of an irreducible component of the fiber. The irreducible component is a toric variety corresponding to the fan $\Sigma(p,F) \subseteq N(F)_{\Q}$ consisting of all polyhedral in $\S_p$ containing $F$.  The corresponding character lattice is $M(F) = N(F)^{\vee}$.
\item Let $\eta$ be the generic point of $\P^1$. For any cone $\sigma \in \Sigma$ we have a T-orbit $\orb(\eta,\sigma)$ of dimension $\codim(\sigma) + 1$. The closure of this orbit in $ \widetilde{X}$ is denoted by $B_{\sigma}$. Then $B_{\sigma}$ dominates $\mathbb{P}^1$ with general fiber given by the subvariety of $X_{\Sigma}$ corresponding to $\sigma$. $B_{\sigma}$ is itself a T-variety of complexity one (see proof of \cite[Theorem 4.1]{UN}). Hence, $B_{\sigma}$ corresponds to a divisorial fan $\S(\sigma)$. Over each $p \in \mathbb{P}^1$, the polyhedral complexes $\S(\sigma)_p \subseteq N(\sigma)_{\Q}$ consist of all polyhedra in $\S_p$ with recession cone containing $\sigma$. The recession fan of $\S(\sigma)$ is the star $\Sigma(\sigma)$. For example, for $\tau \in \Sigma(1)$, $B_{\tau}$ is a horizontal divisor in the sense of \cite{LLM}.  
\end{itemize}
On $X$, some of the subvarieties are contracted, namely those $B_{\sigma}$ where $\sigma$ is in $K$. In this case, $r(B_{\sigma})$ is contracted to a subvariety of one dimension less. We denote by $W_{\sigma}$ the corresponding orbit closure $r(B_{\sigma})$ in $X$. Then $\dim(W_{\sigma}) = \codim(\sigma)$.

Denote also by the same symbols $Z_{p,F}$ and $B_{\sigma}$ the subvarieties  $r(Z_{p,F})$ and $r(B_{\sigma})$ for $\sigma \notin K$, respectively.

Let us introduce a bit more notation. 

\begin{notation}\label{not:vertex} Let $F \in S_p$ with $\rec(F) = \sigma$. Then from the description of the invariant subvariety $B_{\sigma}$ given above, it follows that the image of $F$ in $\S(\sigma)_p$ is zero-dimensional. We denote this image by $v_F$. It is an element in $N(\sigma)_{\Q}$. Let $\mu(F)$ be the smallest positive integer in $N(\sigma)$ such that $\mu(F)v_F$ is a lattice point. 
\end{notation}

If $\sigma \in K$ we consider the index $s_{\sigma}=\left[\on{Stab}\left(r\left(Z_{p,F}\right)\right) \colon \on{Stab}\left(Z_{p,F}\right)  \right]$ (note that since the map $r$ is equivariant, any $t \in T$ in the stabilizer of $Z_{p,F}$ will be in the stabilizer of its image). 

For any integer $k \geq 0$ let $\overline{\Eff_k}(X) \subseteq N_k(X)$ be the pseudoeffective cone of $X$ inside the group of $k$-cycles modulo numerical equivalence.
The following is \cite[Proposition 3.5]{UN}.
\begin{theorem}\label{th:eff_cone}
For $k \geq 0$, the pseudoeffective cone $\overline{\Eff_k}(X)$ of $X$ is rational polyhedral, generated by classes of invariant subvarieties. Moreover, a list of generators is provided by the following classes.
\begin{itemize}
\item $B_{\sigma}$ with $\sigma \in \Sigma(n-k+1), \; \sigma \notin K$,
\item $Z_{p,F}$ with $p \in \P^1, \; F \in \S_p(n-k),\; \rec(F) \notin K$,
\item $W_{\sigma}$ with $\sigma \in \Sigma(n-k), \; \sigma \in K$.
\end{itemize}
\end{theorem}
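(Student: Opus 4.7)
The plan is to proceed in two stages: first, reduce to showing that the effective cone is generated by classes of $T$-invariant subvarieties, and second, enumerate these invariant subvarieties of dimension $k$ and extract the stated list.

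For the first stage, the natural tool is a torus-degeneration argument. Given any irreducible subvariety $V \subseteq X$ of dimension $k$, consider the orbit map $T \times V \to X$. By taking the closure in $\overline{T} \times X$ for a suitable projective toric compactification $\overline{T}$ of $T$, one obtains a flat family of $k$-cycles parametrized by $\overline{T}$. The specializations of this family over the $T$-fixed points of $\overline{T}$ are $T$-invariant cycles, and since all fibers of a flat family over a connected rational base lie in the same numerical class, each such specialization is numerically equivalent to $V$. Because the $T$-fixed specializations are effective combinations of $T$-invariant subvarieties, this shows $\overline{\Eff_k}(X)$ is spanned by classes of $T$-invariant subvarieties.

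For the second stage, I would enumerate the $T$-invariant subvarieties of $X$ of dimension $k$. On $\tilde{X}$, the bulleted list in Section~\ref{sec:subvarieties} shows that the invariant subvarieties are exactly the vertical cycles $Z_{p,F}$ with $F \in \S_p$ and the horizontal cycles $B_{\sigma}$ with $\sigma \in \Sigma$. The contraction $r \colon \tilde{X} \to X$ collapses $B_{\sigma}$ for $\sigma \in K$ onto $W_{\sigma}$ of one lower dimension, and by the description of the cones in $K$, it likewise collapses any $Z_{p,F}$ with $\rec(F) \in K$ into the union of the $W_{\rec(F)}$ (so such classes produce no new generators; they are absorbed by the $W_{\sigma}$'s). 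Matching the dimension formulas $\dim Z_{p,F} = n - \dim F$, $\dim B_{\sigma} = n - \dim \sigma + 1$, and $\dim W_{\sigma} = n - \dim \sigma$ to the prescribed dimension $k$ produces exactly the three indexing conditions $F \in \S_p(n-k)$ with $\rec(F) \notin K$, $\sigma \in \Sigma(n-k+1)$ with $\sigma \notin K$, and $\sigma \in \Sigma(n-k)$ with $\sigma \in K$.

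Finiteness of the divisorial fan $\S$ (finitely many $p \in P$, finitely many polyhedra per slice) and of $\Sigma$ ensures that the list is finite, which, combined with the first stage, yields that $\overline{\Eff_k}(X)$ is finitely generated by classes of invariant subvarieties, hence rational polyhedral.

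The hard part, I expect, will be making the degeneration step rigorous in the complexity-one setting, and in particular verifying that the specializations are honest effective combinations of the three listed families rather than additional, a priori unrecognized, $T$-invariant cycles. One must show that no exotic invariant subvariety arises beyond those already catalogued in Section~\ref{sec:subvarieties}, and—most delicately—check that contracted vertical cycles $Z_{p,F}$ with $\rec(F)\in K$ genuinely factor through the $W_{\sigma}$'s in a way that doesn't force additional generators; this requires examining the image of $r$ on each orbit and verifying that the class of $r(Z_{p,F})$ is a non-negative multiple of $[W_{\rec(F)}]$.
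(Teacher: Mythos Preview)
The paper does not supply its own proof of this theorem: it is quoted verbatim as \cite[Proposition~3.5]{UN}, with the subsequent Remark explaining (again by citation, to \cite[Lemma~3.4]{UN}) why the contracted vertical cycles $Z_{p,F}$ with $\rec(F)\in K$ are redundant, being numerically proportional to $W_{\rec(F)}$. So there is no argument here to compare against; your sketch is an attempt to reconstruct what the cited source presumably does.

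Your two-stage plan is the standard one and is sound in outline. A couple of points deserve attention if you want to turn it into a proof. First, the torus-degeneration step only shows that every effective class is a non-negative combination of classes of $T$-invariant subvarieties; it does not by itself give finiteness, because a complexity-one $T$-variety has infinitely many $T$-orbits (one for each point of $\P^1$ and each face of the slice there). Your finiteness paragraph invokes only the finitely many special points $p\in P$, but the theorem as stated lists $Z_{p,F}$ for \emph{all} $p\in\P^1$. To conclude rational polyhedrality you must argue that for $p\notin P$ the cycle $Z_{p,\sigma}$ is numerically (indeed rationally) equivalent to the corresponding cycle over any other generic point, and more generally that the relation~\eqref{eq:special} identifies special fibres with one another; only then does the a priori infinite generating set collapse to a finite one. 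Second, your handling of the contracted case is slightly imprecise: $r$ does not collapse $Z_{p,F}$ with $\rec(F)\in K$ dimensionally---it maps it birationally onto an invariant subvariety of $X$---but the point (made precise in \cite[Lemma~3.4]{UN} and recorded in the Remark) is that $r_*[Z_{p,F}]=\frac{s_{\rec(F)}}{\mu(F)}[W_{\rec(F)}]$ in $N_k(X)$, so these classes are positive multiples of the listed $W_\sigma$ and contribute no new extremal rays.
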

\begin{rem}
 Note that in the theorem above, subvarieties of the form $Z_{p,F}$ with $\rec(F) \in K$ do not appear. Indeed, for a cone $\sigma \in K$, one can show that $r_*(Z_{p,F})$ is numerically equivalent to $\left(s_{\rec(F)}/\mu(F)\right)\cdot W_{\rec(F)}$ for any $p \in \P^1$ and any $F$ with $\rec(F) = \sigma$(see \cite[Lemma 3.4]{UN}). Hence all such classes $Z_{p,F}$ are proportional and we need only to remember the representative $W_{\rec(F)}$.
\end{rem}
\subsection{T-Cartier divisors}\label{sec:cartier-div}
We now briefly describe T-invariant Cartier divisors on $X$ in terms of divisorial support functions on $\S$. For details we refer to \cite[Section 7]{AIPSV} and \cite{PS}.

First, recall that given a piecewise affine function $f$ on a polyhedral subdivision $\S_p$ of $N_{\Q}$ with recession fan $\Sigma$, its \emph{recession function} $\on{rec}(f)$ is the piecewise linear function on $\Sigma$ defined by 
\[
\on{rec}(f)\colon |\Sigma| \longrightarrow \R, \; u \longmapsto \lim_{\lambda \to \infty}\frac{f(v_0+\lambda u)-f(v_0)}{\lambda}.
\]
\begin{Def}\label{def:div-car-fun}
A \emph{divisorial support function} on $\S$ is a collection $h = (h_p)_{p \in \P^1}$ of continuous, piecewise affine functions $h_p \colon |\S_p| \to \Q$ such that 
\begin{enumerate}
\item $h_p$ has integral slope and integral translation on every polyhedron in the polyhedral complex $\S_p$ of $N_{\Q}$. In other words, for any $F \in \S_p$,  we have $h_p|_F(u) = \langle m_F, u \rangle +\ell_F$  with $(m_F, \ell_F) \in M \times \Z$.
\item All $h_p$ have the same recession function $\rec(h)$.
\item The set of points $p\in \P^1$ such that $\rec(h) \neq h_p$ is finite.
\end{enumerate}
The set of all divisorial support functions on $\S$ is denoted by $\SF(\S)$.
\end{Def}
To any divisor $D$ on $\P^1$ we can associate a divisorial support function $\SF(D)$ by $\SF(D)_p \equiv \on{coeff}_pD$. Also, for any $u \in M$ we define its corresponding support function $\SF(u)_p\equiv u$. 
\begin{Def}
A divisorial support function $h \in \SF(\S)$ is called \emph{principal} if $h = \SF(u) + \SF(D)$ for some $u \in M$ and some principal divisor $D$ on $\P^1$. It is called \emph{Cartier} if its restriction to any affine T-invariant open subset is principal. We denote by $\CaSF(\S)$ the set of divisorial Cartier support functions. It is a free abelian group. 
\end{Def}
From the description of invariant subvarieties given above we see that there are two classes of $T$-invariant prime Weil divisors on $X$. The \emph{vertical} invariant prime divisors arising as the closure of a family of $n$-dimensional $T$-orbits. These are parametrized by pairs $(p,v)$ of points $p \in \P^1$ and vertices $v$ of the polyhedral subdivision $\S_p$. And then there are the \emph{horizontal} invariant prime divisors arising as the closure of a family of $(n-1)$-dimensional $T$-orbits. These are parametrized by rays $\tau$ of $\Sigma$. 

To a given divisorial support function $h = (h_p)_{p \in \P^1}$, one associates a T-invariant Weil divisor 
\begin{align}\label{eq:weil-div}
D_h = -\sum_{\tau \in \Sigma(1)}h(v_{\tau})B_{\tau} - \sum_{\substack{(p,v)\\ v \in S_p(0)}} \mu(v)h_p(v)Z_{p,v},
\end{align}
where one omits the prime divisors contracted by $r$. 
The following is \cite[Proposition 3.10]{PS}
\begin{prop}
The map 
\[
h \longmapsto D_h
\]
induces an isomorphism of free abelian groups $\CaSF(\S) \simeq \on{T-Ca}(X)$.
\end{prop}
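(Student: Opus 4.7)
The plan is to verify, in turn, that $h \mapsto D_h$ is a group homomorphism, that it lands in $\on{T-Ca}(X)$ (not merely in the group of T-invariant Weil divisors), that it is injective, and that it is surjective. The homomorphism property is immediate from the $\Z$-linearity of formula \eqref{eq:weil-div}. Injectivity is essentially formal: if $D_h = 0$, then every coefficient in \eqref{eq:weil-div} vanishes, so $h(v_\tau) = 0$ for all $\tau \in \Sigma(1)$ and $h_p(v) = 0$ for all $v \in \S_p(0)$. Since each $h_p$ is continuous and affine on every polyhedron $F \in \S_p$, and since the common recession function $\rec(h)$ is pinned down by its values on the rays $v_\tau$, this data uniquely determines $h$ and forces $h_p \equiv 0$ for every $p$.

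The substantive content is the Cartier property together with surjectivity, which I would handle in parallel on the affine open cover $\{X(\ca D)\}_{\ca D \in \S}$. The Cartier hypothesis on $h$ means that $h|_{\ca D}$ is principal, i.e.\ equals $\SF(u_{\ca D}) + \SF(\div f_{\ca D})$ for some $u_{\ca D} \in M$ and $f_{\ca D} \in k(\P^1)^*$. My plan is to prove the identity
\[
D_h|_{X(\ca D)} = \div\bigl(\chi^{u_{\ca D}} \cdot f_{\ca D}\bigr)
\]
on each $X(\ca D)$, which simultaneously exhibits $D_h$ as Cartier with explicit local equations and isolates the local datum. This reduces to computing the order of vanishing of the T-eigenfunction $\chi^u f$ along each T-invariant prime divisor meeting $X(\ca D)$; these orders turn out, up to the sign conventions, to equal exactly $h(v_\tau)$ along $B_\tau$ and $\mu(v) h_p(v)$ along $Z_{p,v}$, matching \eqref{eq:weil-div}. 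Conversely, for surjectivity, any $D \in \on{T-Ca}(X)$ restricts on each affine $X(\ca D)$ to a principal divisor, since on an affine T-variety every T-Cartier divisor is principal (see \cite[Section 7]{AIPSV}), so $D|_{X(\ca D)} = \div\bigl(\chi^{u_{\ca D}} f_{\ca D}\bigr)$. Setting $h_{\ca D} = \SF(u_{\ca D}) + \SF(\div f_{\ca D})$, the local pieces must agree on overlaps $X(\ca D \cap \ca D')$ (since both encode the same global $D$), hence glue to a global $h \in \CaSF(\S)$ with $D_h = D$.

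The main obstacle I foresee is the local order-of-vanishing computation, in particular the appearance of the multiplier $\mu(v)$ attached to a rational but non-integral vertex $v \in \S_p(0)$; this is a genuine ramification index coming from the inclusion of the sublattice generated by $v$ into $N$, and it is handled carefully in \cite{PS}, so my approach would be to invoke those local calculations rather than reprove them. The remaining bookkeeping is purely combinatorial: verifying that the locally principal data glue correctly along faces, which follows directly from the fact that $\S$ is closed under intersections and that any intersection $\ca D \cap \ca D'$ is a common face.
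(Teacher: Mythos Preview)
The paper does not give its own proof of this proposition; it is quoted verbatim as \cite[Proposition~3.10]{PS} and left without argument. Your sketch is therefore already more than the paper offers, and it correctly reconstructs the strategy of \cite{PS}: linearity of \eqref{eq:weil-div} for the homomorphism property, local principality on each affine chart $X(\ca D)$ for both the Cartier condition and surjectivity, and recovery of $h$ from the coefficients for injectivity.

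One small caveat on your injectivity step: formula \eqref{eq:weil-div} explicitly omits the prime divisors contracted by $r$, so from $D_h = 0$ you only directly obtain $\overline{h}(v_\tau) = 0$ for rays $\tau \notin K$, not for all $\tau \in \Sigma(1)$ as you write. The values at the vertices of $\S_p$ alone do not pin down the slope of $h_p$ along an unbounded cell whose recession cone lies in $K$. This gap is closed by the Cartier hypothesis (the local principal data on the chart containing the contracted orbit force the missing slope), which you already invoke for the other parts of the argument, so your overall plan still goes through; but the sentence as stated slightly overclaims what follows immediately from $D_h = 0$.
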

\begin{exa}\label{exa:principal}
If $ h = SF(u)$ for $u \in M$ then 
\[
D_h = \sum_{\tau \in \Sigma(1)}\langle u, v_{\tau}\rangle B_{\tau} - \sum_{\substack{(p,v)\\ v \in S_p(0)}} \mu(v)\langle u,v \rangle Z_{p,v}.
\]
Let $p \neq \infty$ be any point in $\P^1$. If $h = SF\left([p] - [\infty]\right)$ then
\[
D_h= \sum_{ v \in S_{\infty}(0)} -\mu(v) Z_{\infty,v} + \sum_{ v \in S_{p}(0)} \mu(v) Z_{p,v}.
\]
\end{exa}
Consider the Picard group $\on{Pic}(X)$ consisting of all Cartier divisors on $X$ modulo linear equivalence. If $h$ is principal then $D_h$ is $0$ in $\on{Pic}(X)$. Furthermore, if we consider the equivalence relation on $\CaSF(\S)$ given by $h \sim h'$ if and only if $h - h'$ is principal  then it follows as a particular case of \cite[Corollary 3.15]{PS} that the map $h \mapsto D_h$ induces an isomorphism 
\[
\left(\CaSF(\S)/ \sim \right)  \; \simeq \; \on{Pic}(X).
\]
\begin{exa}
It follows from Example \ref{exa:principal} that all special fibers (with corresponding multiplicities) are linearly equivalent. 
\end{exa}

We make two remarks. 
\begin{rem}
It follows from Theorem \ref{th:eff_cone} above that the Picard group $\on{Pic}(X)$ is generated by classes $Z_{p,v}$, for $v \in S_p(0)$, and $B_{\tau}$, for $\tau \in \Sigma(1)$, $\tau \notin K$ since the divisor class corresponding to a contracted ray can be written as a combination of the other classes (see also \cite[Corollary 3.17]{PS}).
\end{rem}

We end this section with the following important definition.
\begin{Def}\label{def:restriction-div}(Restriction to subvarieties)
Let $h \in \CaSF(\S)$. 

For $\tau\in \Sigma$ consider the embedding $\iota \colon B_{\tau} \hookrightarrow \tilde{X}$ of the corresponding subvariety. Recall that $B_{\tau}$ is itself a rational complete $T$-variety of complexity one with divisorial fan $\S(\tau)$. Then $\iota^*D_h$ is an invariant Cartier divisor on $B_{\tau}$ corresponding to a divisorial Cartier support function on $\CaSF(S(\tau))$. We denote this function by $h({\tau})$. 

Similarly, for a point $p \in \P^1$ and $F \in S_p$ such that $Z_{p,F}$ is an invariant subvariety of a toric subvariety $j\colon X_{\Sigma(p,F)} \hookrightarrow \tilde{X}$, we consider the pullback of $j^*D_h$. This gives a toric Cartier divisor and hence a virtual support function on $\Sigma(p,F)$, which we denote by $h_{p,F}$.

\end{Def}

\section{Chow groups}\label{sec:ch}
For any normal, irreducible algebraic variety $Y$ over $k$, the Chow group $A_q(Y)$ of $Y$ is defined to be $Z_q(Y)/R_q(Y)$, where $Z_q(Y)$ is the free abelian group generated by all $q$-dimensional closed subvarieties of $X$, and $R_q(X)$ is the subgroup generated by divisors $[\div(f)]$ of non-zero rational functions $f$ on $(q+1)$-dimensional subvarieties $W$ of $Y$. When an algebraic group $\Gamma$ acts on $Y$ one can form the \emph{$\Gamma$-invariant} Chow group $A_q^{\Gamma}(Y) = Z_q^{\Gamma}(Y)/R_q^{\Gamma}(Y)$, with $Z_q^{\Gamma}(Y)$ the free abelian group generated by $\Gamma$-invariant closed subvarieties of $Y$, and $R_q^{\Gamma}(Y)$ the subgroup generated by all divisors of eigenfunctions on $\Gamma$-invariant $(q+1)$-dimensional subvarieties. Recall that a function $f$ in $R(W)^*$ is said to be an eigenfunction if $g \cdot f = \chi(g)f$ for all $g \in \Gamma$, for some character $\chi = \chi_f$ on $\Gamma$.
The following useful result is \cite[Theorem~1]{FMSS}.
\begin{theorem}\label{th:inv}
If a connected solvable algebraic group $\Gamma$ acts on $Y$, then then the canonical homomorphism $A_k^{\Gamma}(Y) \to A_k(Y)$ is an isomorphism. 
\end{theorem}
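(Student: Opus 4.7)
The plan is to construct an inverse to the canonical map $A_k^{\Gamma}(Y) \to A_k(Y)$ via a one-parameter degeneration, after first reducing to the case where $\Gamma$ itself has dimension one. Since $\Gamma$ is connected and solvable, a Lie-Kolchin type argument furnishes a composition series $\{e\} = \Gamma_0 \trianglelefteq \Gamma_1 \trianglelefteq \cdots \trianglelefteq \Gamma_r = \Gamma$ whose successive quotients are each isomorphic to $\mathbb{G}_m$ or $\mathbb{G}_a$. Filtering the forgetful map $A_k^{\Gamma}(Y) \to A_k(Y)$ through the intermediate groups $A_k^{\Gamma_i}(Y)$, induction on $r$ reduces the statement to $\Gamma = \mathbb{G}_m$ or $\Gamma = \mathbb{G}_a$.

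For surjectivity in this one-dimensional case, I would take a $k$-cycle $[V]$ represented by a subvariety $V \subseteq Y$ and consider the morphism $\Gamma \times V \to \Gamma \times Y$ sending $(g, v)$ to $(g, g\cdot v)$; its image is a locally closed subscheme flat over $\Gamma$. Letting $\bar\Gamma = \P^1$ compactify $\Gamma$ and $\overline{\Phi} \subseteq \bar\Gamma \times Y$ denote the closure, the projection $\overline{\Phi} \to \bar\Gamma$ realises a flat family of $k$-cycles over a rational curve, and any two of its fibers are rationally equivalent. The fiber over $1\in \Gamma$ is $V$, whereas the fibers over the boundary points of $\bar\Gamma \setminus \Gamma$ must be shown to be $\Gamma$-invariant: for $\mathbb{G}_m$ the limits $t \to 0$ and $t \to \infty$ lie in the $\mathbb{G}_m$-fixed locus, and for $\mathbb{G}_a$ the analogous statement holds after a suitable equivariant completion of the action. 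Either way, $[V]$ is rationally equivalent to a $\Gamma$-invariant cycle.

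For injectivity, suppose $\alpha \in Z_k^{\Gamma}(Y)$ vanishes in $A_k(Y)$, so that $\alpha = \sum [\on{div}(f_i)]$ for some rational functions $f_i$ on $(k+1)$-dimensional subvarieties $W_i \subseteq Y$. Because $g_*\alpha = \alpha$ for every $g \in \Gamma$, each translate $(g\cdot W_i,\, g\cdot f_i)$ also computes $\alpha$. Running the same degeneration construction on the $\Gamma$-orbit of each pair $(W_i, f_i)$ and passing to the limit at a boundary point of $\bar\Gamma$ would produce a pair $(W_i', f_i')$ in which $W_i'$ is $\Gamma$-stable and $f_i'$ is an eigenfunction for the $\Gamma$-action, with $\sum [\on{div}(f_i')] = \alpha$. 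This would exhibit $\alpha$ as an element of $R_k^{\Gamma}(Y)$, hence as zero in $A_k^{\Gamma}(Y)$.

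The hardest step is the precise identification of the flat limits in both the surjectivity and injectivity arguments: one must verify that the limit cycles land in the $\Gamma$-fixed locus, that limit rational functions really are eigenfunctions, and that all multiplicities are tracked correctly. This is transparent for a torus thanks to the Bia\l ynicki-Birula decomposition, but genuinely subtle for the unipotent factor $\mathbb{G}_a$, where one may need to replace $\bar\Gamma \times Y$ by an equivariant blow-up so that the $\Gamma$-action extends across the boundary. In \cite{FMSS} this technicality is side-stepped by combining an equivariant envelope argument with Noetherian induction, reducing to schemes with finitely many $\Gamma$-orbits on which both sides of the map admit matching presentations by orbit closures.
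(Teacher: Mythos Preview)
The paper does not prove this statement: it is quoted as \cite[Theorem~1]{FMSS} and used as a black box, with no argument supplied. There is therefore no proof in the present paper to compare your proposal against.

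Viewed instead as a sketch of the argument in \cite{FMSS}, your outline has the right overall shape for surjectivity---reduce via a composition series to a one-dimensional $\Gamma$, then specialize cycles along a compactification $\bar\Gamma \simeq \mathbb{P}^1$---and this is essentially Hirschowitz's lemma as invoked there. The injectivity paragraph, however, is where the sketch is too loose to count as a proof. You cannot simply ``degenerate the pair $(W_i,f_i)$'' and assert that the limit rational function is an eigenfunction on a $\Gamma$-stable limit variety: a limit of rational functions along such a family need not even exist as a rational function on the limit cycle, and when it does the eigenfunction property requires its own argument. This is exactly the step where \cite{FMSS} does additional work (treating separately the cases where $W_i$ is or is not already $\Gamma$-stable), and your proposal would need to supply that work rather than gesture at it.
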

We obtain the following corollary. 
\begin{cor}\label{cor:inv}
Let $X$ be a complete rational T-variety of complexity one. Then the Chow groups $A_k(X)$ are generated by the classes described in Theorem \ref{th:eff_cone}. 
\end{cor}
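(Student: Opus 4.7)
The plan is to apply the FMSS theorem (stated immediately above) with $\Gamma = T$. Since any algebraic torus is connected and solvable and $T$ acts on $X$, we get a canonical isomorphism $A_k^T(X) \xrightarrow{\sim} A_k(X)$. In particular $A_k(X)$ is generated as an abelian group by the classes $[V]$ of $T$-stable $k$-dimensional closed subvarieties $V \subseteq X$.

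Next I would use the combinatorial classification of $T$-stable subvarieties recalled in Section \ref{sec:subvarieties}. Every $T$-stable closed subvariety is the closure of a $T$-orbit, and the orbit closures on the contraction-free model $\tilde X$ are, by construction, exactly the $Z_{p,F}$ for $p \in \P^1$ and $F \in \S_p$, together with the $B_\sigma$ for $\sigma \in \Sigma$, with the listed dimensions. Pushing forward via the proper birational map $r\colon \tilde X \to X$, the $B_\sigma$ with $\sigma \notin K$ and the $Z_{p,F}$ map isomorphically onto their images (which we continue to denote by the same symbols), while $B_\sigma$ with $\sigma \in K$ contracts onto the orbit closure $W_\sigma = r(B_\sigma)$ of dimension $\codim(\sigma)$. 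Since $r$ is birational and proper, every $T$-invariant subvariety of $X$ appears on this list (either as an image of an invariant subvariety of $\tilde X$, or as $W_\sigma$ which is itself the image of $B_\sigma$).

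The final step is to eliminate the redundant generators $Z_{p,F}$ with $\rec(F) \in K$, which do not appear in the list of Theorem \ref{th:eff_cone}. For such $F$ the remark following Theorem \ref{th:eff_cone}, based on \cite[Lemma 3.4]{UN}, asserts that $r_{*}[Z_{p,F}]$ is proportional to $[W_{\rec(F)}]$ (with proportionality constant $s_{\rec(F)}/\mu(F)$), so the class is already captured by a $W_\sigma$ on our list.

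The main obstacle is a subtle one of equivalence relations: the cited lemma is formulated for numerical equivalence in $N_k(X)$, whereas the corollary concerns rational equivalence in $A_k(X)$. To close this gap I would revisit the proof in \cite{UN} — it proceeds by exhibiting a flat one-parameter family whose generic fibre is $Z_{p,F}$ and whose special fibre is the appropriate multiple of $W_{\rec(F)}$. Such a family yields rational equivalence, not merely numerical equivalence, so the same list of classes in fact generates $A_k(X)$, which is the statement of the corollary.
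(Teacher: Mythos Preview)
Your proposal is correct and matches the paper's intended argument; the corollary is stated there without proof, as an immediate consequence of the FMSS theorem together with the enumeration of $T$-invariant subvarieties in Section~\ref{sec:subvarieties}.

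One remark on your final step: the detour through numerical versus rational equivalence is unnecessary. For $F$ with $\rec(F)=\sigma\in K$, the pushforward $r_*[Z_{p,F}]$ is a \emph{cycle} on $X$, and the image $r(Z_{p,F})$ is literally the subvariety $W_\sigma$ (the map $Z_{p,F}\to W_\sigma$ is generically finite of some degree, which is what the constant $s_\sigma/\mu(F)$ records). So $r_*[Z_{p,F}]$ is an integer multiple of $[W_\sigma]$ as an equality in $Z_k(X)$, not merely up to equivalence; no appeal to a one-parameter degeneration is needed. Also, your claim that ``every $T$-stable closed subvariety is the closure of a $T$-orbit'' is not quite right in complexity one --- the horizontal $B_\sigma$ have no dense $T$-orbit --- but the dichotomy into vertical orbit closures $Z_{p,F}$ and horizontal families $B_\sigma$ that you then use is the correct enumeration.
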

In order to have a complete combinatorial description of the Chow groups $A_k(X)$ of a complete rational T-variety of complexity one one needs to find the relations between these generators. This will be done in Theorem \ref{th:chow-exact} in the following section.

Before, we recall the general definition of Fulton's operational Chow cohomology groups of any normal, irreducible algebraic variety $Y$ as above.

Let $A_*(Y) = \bigoplus_qA_q(Y)$. The Chow groups $A_q(Y)$ are called Chow ``homology'' groups, since they are covariant with respect to proper maps. In order to have a natural intersection product, one considers the Chow ``cohomology'' groups $A^q(Y)$ (see \cite[Chapter~17]{fulint}). They satisfy the following expected functorial properties.
\begin{enumerate}
\item One has ``cup products'' $A^p(Y) \otimes A^q(Y) \to A^{p+q}(Y)$, $a \otimes b \mapsto a \cup b$, making $A^*(Y) =\bigoplus_qA^q(Y)$ into a graded associative ring.
\item There are contravariant graded ring maps $f^* \colon A^*(Y) \to A^*(Y')$ for arbitrary morphisms $f \colon Y' \to Y$;
\item One has ``cap products" $A^q(Y) \otimes A_m(Y) \to A_{m-q}(Y)$, $c \otimes z \mapsto c \cap z$, making $A_*(Y)$ into an $A^*(Y)$-module and satisfying the usual projection formula;
\item when $Y$ is non-singular of dimension $n$, then the ``Poincar\'e duality map'' $A^q(Y) \to A_{n-q}(Y)$, $c \mapsto c \cap [Y]$ is an isomorphism, and the ring structure on $A^*(Y)$ is that determined by the intersection products of cycles of $Y$; 
\item vector bundles on $Y$ have Chern classes in $A^*(Y)$. 
\end{enumerate}
\begin{rem}\label{rem:pd}
The Poincaré duality map is still an isomorphism if one assumes some mild singularities (e.g. $\Q$-factorial), and  after tensoring with $\Q$ (see \cite[Section 3]{KT}). 
\end{rem}
An element in $A^q(Y)$ determines, by the cap product, a homomorphism from $A_q(Y)$ to $A_0(Y)$; and if $Y$ is complete, one can compose with the degree map from $A_0(Y)$ to $\Z$ and one has a natural ``Kroenecker duality'' homomorphism 
\begin{align}
A^q(Y) \longrightarrow \on{Hom}(A_q(Y), \Z), \quad z \longmapsto \left[a \mapsto \on{deg}(z \cap a)\right].
\end{align}
As was mentioned in the introduction, this map is in general not an isomorphism.  We will see in the next section that for projective, $\Q$-factorial, rational, contraction-free T-varieties of complexity one, this map is indeed an isomorphism after tensoring with $\Q$ (Proposition~\ref{prop:kron-dual}). 

\subsection{The Chow group of a complexity-one T-variety}\label{sec:chow}
Let $X$ be a $(n+1)$-dimensional complete, rational T-variety of complexity one with divisorial fan $\S$ over $Y= \P^1$, recession fan $\Sigma$ in $N_{\Q}$ and subset $K \subseteq \Sigma$. Assume that $K$ is a proper subset of $\Sigma$, i.e. $K \neq \Sigma$. We say that a cone $\sigma \in\Sigma$ is \emph{contracted} if $\sigma \in K$.

As before, for $p \in \P^1$ we write $\S_p$ for the corresponding complete polyhedral subdivision of $N_{\Q}$. Let $P$ be the set of points of $\P^1$ such that $\S_p \neq \Sigma$. $P$ is a finite set and we assume that $P$ has size at least two. 

 For an integer $k \geq 0$ we define the following sets. 
\begin{align*}
V_k &= \left\{ F \in \S_p(n-k), \; p \in P \text{ such that }\rec(F) \notin K\right\},\\
R_k &= \left\{ \sigma \in \Sigma(n-k+1), \; \sigma \notin K \right\},\\
T_k &= \left\{ \sigma \in \Sigma(n-k), \; \sigma \in K \right\}.
\end{align*}
\begin{rem}
We always have $T_n = \emptyset$. If $X = \widetilde{X}$ is contraction free, then for all $k$, we have $T_K = \emptyset$. 
\end{rem}

We begin by showing that when $X$ is a projective, $\Q$-factorial, rational, contraction-free $T$-variety of complexity one, then the Kroenecker duality map is in fact an isomorphism after tensoring with $\Q$. 

\begin{prop}\label{prop:kron-dual}
Let $X$ be a projective, $\Q$-factorial, rational, contraction-free $T$-variety of complexity one. After tensoring with $\Q$, the Kroenecker duality map
\begin{align}\label{eq:kron-dual}
A^q(X)_{\Q} \longrightarrow \on{Hom}(A_q(X)_{\Q}, \Q), \quad z \longmapsto \left[a \mapsto \on{deg}(z \cap a)\right]
\end{align}
is an isomorphism. 
\end{prop}
\begin{proof}
By Poincaré duality $A^q(X)_{\Q} \simeq A_{n-q}(X)_{\Q}$ (see Remark \ref{rem:pd}) it suffices to show that the induced pairing
\[
A_{n-q}(X)_{\Q} \times A_q(X)_{\Q} \longrightarrow \Q
\]
is perfect. By \cite[Theorem 4]{LLM}, we have the following Stanley--Reisner type description of the homology groups:
\[ 
A_*(X)_{\Q} = \frac{\Q\left[x_{\omega} \; | \; \omega \in R_n \sqcup V_n\right]}{\ca J},
\]
where $\ca J$ is generated by the linear forms with integer coefficients $\sum n_{\omega}x_{\omega}$ such that the corresponding invariant divisor $\sum_{\omega \in V_n} n_{\omega}Z_{\omega} + \sum_{\omega \in R_n}n_{\omega}B_{\omega}$ is principal, together with the square-free monomials $\prod_{\omega \in R_n \sqcup V_n } x_{\omega}$ such that the intersection $\bigcap_{\omega \in V_n}  Z_{\omega} \cap \bigcap_{\omega \in R_n}B_{\omega}$ is empty.  In particular,  square-free monomials not contained in $\ca J$ are of the form
\[
x_F = \prod_{\tau \in \rec(F)(1)}x_{\tau} \cdot \prod_{v \in F(0)}x_v
\] 
for some $F \in \ca S_p$ (see \cite[Lemma 4.3]{LLM}).

As a key fact to prove this result the authors define a technical combinatorial property called ``shellability'' (\cite[Definition~3.8]{LLM}). The authors show in \cite[Proposition 3.11]{LLM} that the divisorial fans of projective, $\Q$-factorial, contraction-free rational T-varieties of complexity one are shellable, which allows them to prove the above representation.

Now, in order to show the proposition, it thus suffices to show that the pairing 
\begin{equation}\label{eq:perfect}
\left(\frac{\Q\left[x_{\omega} \; | \; \omega \in R_n \sqcup V_n\right]}{\ca J}\right)_{n-q} \times \left(\frac{\Q\left[x_{\omega} \; | \; \omega \in R_n \sqcup V_n\right]}{\ca J}\right)_{q} \longrightarrow \Q,
\end{equation}
where $(\cdot )_{\ell}$ denotes the $\ell$'th graded piece, is perfect.Let $b \in \left(\frac{\Q\left[x_{\omega} \; | \; \omega \in R_n \sqcup V_n\right]}{\ca J}\right)_{n-q}$ be non-zero. Without lost of generality, we can assume that $b = x_F$ for $F \in \ca S_p$. Let $G \in \ca S_p(n)$ be a maximal polyhedron containing $F$ and set 
\[
x_{F'} = \prod_{\rho \in \rec(G)(1) \setminus \rec(F)(1)}x_{\rho} \cdot \prod_{v \in G(0) \setminus F(0)}x_v. 
\]
Since $\ca S_p$ is simplicial we have $x_{F'} \in \left(\frac{\Q\left[x_{\omega} \; | \; \omega \in R_n \sqcup V_n\right]}{\ca J}\right)_{q}$ and $x_{F} \cdot x_{F'} = x_{G} \neq 0$. Hence the pairing in \eqref{eq:perfect} is perfect, which is what we wanted to show. 

\end{proof}


We now consider a more general complete, rational (not necessarily contraction-free) T-variety of complexity one. In this case, the Chow homology groups are not longer generated by divisors (see \cite{LLM} for an example). Nevertheless, in \cite[Theorem~4.1]{UN}, the author gives a nice representation in terms of generators and relations:
\begin{theorem}\label{th:chow-exact}
For a rational, complete T-variety $X$ of complexity one, there is for any $0 \leq k \leq \dim(X)$ an exact sequence 
\begin{align}\label{eq:chow-exact}
\bigoplus_{F \in V_{k+1}}M(F) \bigoplus_{\tau \in R_{k+1}}\left(M(\tau) \oplus \Z^P/\Z\right)\bigoplus_{\tau \in T_{k+1}}M(\tau) \longrightarrow \Z^{V_k} \oplus \Z^{R_k} \oplus \Z^{T_k} \longrightarrow A_k(X) \longrightarrow 0,
\end{align}
where the maps are given in the following way.

Let $F \in V_{k+1}$. Then we can choose a point $p \in p$ such that $Z_{p,F}$ corresponds to a $(k+1)$-dimensional invariant subvariety of a toric subvariety with character lattice $M(F)$. Then $m \in M(F)$ gets sent to 
\[
\sum_{\substack{G \in S_p(n-k) \\ F \prec G, \;\rec(G) \notin K}} \left\langle m,v_{F,G}\right\rangle Z_{p,G} + \sum_{\substack{G \in S_p(n-k) \\ F \prec G, \;\rec(G) \in K}} \left\langle m , v_{F,G}\right\rangle \frac{s_{\rec(G)}}{\mu(G)} W_{\rec(G)},
\]
where $v_{F,G}$ is defined in Notation \ref{notation}. 

Let $\tau \in R_{k+1}$. Then $B_{\tau}$ corresponds to a $(k+1)$-dimensional T-variety of complexity one. Then on the one hand side, a character $m \in M(\tau)$ is mapped to 
\[
\sum_{\substack{F \in V_k\\\rec(F) = \tau}} \mu(v_F)\left\langle m , v_F\right\rangle Z_{p,F} + \sum_{\substack{\sigma \in R_k\\ \tau \prec \sigma }}\left\langle m, v_{\tau,\sigma} \right\rangle B_{\sigma}
\]
where $v_{\tau,\sigma}$ is defined in Notation \ref{notation} and $v_F$ in Notation \ref{not:vertex}.
On the other hand, a generator of $\Z^p/\Z$ corresponds to a divisor $[p] -[ \infty]$. This is mapped to 
\begin{align}\label{eq:special}
\sum_{\substack{F \in S_p(n-k)\\ \rec(F) = \tau}} \mu(v_F) Z_{p,F} - \sum_{\substack{F \in S_{\infty}(n-k) \\ \rec(F) = \tau}} \mu(v_F) Z_{\infty,F}.
\end{align}

Finally, if $\tau \in T_{k+1}$, then every cone $\sigma \in \Sigma$ containing $\tau$ is also contracted. In particular, such a $(n-k)$-dimensional cone $\sigma$ lies in $T_k$. Then $m \in M(\tau)$ gets mapped to 
\[
\sum_{\substack{\sigma \in \Sigma(n-k) \\ \tau \prec \sigma}} \left\langle m , v_{\tau, \sigma}\right\rangle W_{\sigma}.
\]

\end{theorem}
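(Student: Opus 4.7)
The plan is to apply FMSS-style equivariance in two stages: first to generators via Theorem~\ref{th:eff_cone}, then to relations by enumerating $T$-eigenfunctions on each invariant $(k+1)$-dimensional subvariety. By the corollary to \cite[Theorem~1]{FMSS}, $A_k(X)$ is generated by $T$-stable $k$-cycles, and by the remark following Theorem~\ref{th:eff_cone} every such generator can be written in terms of $\{Z_{p,F}\}_{F\in V_k}$, $\{B_{\sigma}\}_{\sigma\in R_k}$, and $\{W_{\sigma}\}_{\sigma\in T_k}$. This produces the surjection $\Z^{V_k}\oplus\Z^{R_k}\oplus\Z^{T_k}\twoheadrightarrow A_k(X)$ on the right of \eqref{eq:chow-exact}. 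The relations are generated by divisors of $T$-eigenfunctions on $T$-stable $(k+1)$-dimensional subvarieties, so the remaining task is to compute these for the three families indexed by $V_{k+1}$, $R_{k+1}$, and $T_{k+1}$.

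For $F\in V_{k+1}$ the subvariety $Z_{p,F}$ lies inside a toric subvariety of the fiber over $p$ with character lattice $M(F)$ and fan $\Sigma(p,F)\subseteq N(F)_{\Q}$, so the standard toric calculation gives $\div(\chi^m)=\sum_{G\succ F}\langle m,v_{F,G}\rangle Z_{p,G}$, the sum being taken over codimension-one orbits. When $\rec(G)\in K$, the class $Z_{p,G}$ is contracted, and the remark following Theorem~\ref{th:eff_cone} rewrites it as $(s_{\rec(G)}/\mu(G))W_{\rec(G)}$, producing the two summands of the first map. For $\tau\in T_{k+1}$, every cone of $\Sigma$ containing $\tau$ lies in $K$, so $W_{\tau}$ is the equivariant image of a toric variety with character lattice $M(\tau)$ and fan $\{\sigma/\tau\mid \tau\prec\sigma\in\Sigma\}$; the standard toric divisor formula then yields the third map as stated.

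The complexity-one case $\tau\in R_{k+1}$ is the only genuinely new ingredient. Here $B_{\tau}$ is itself a complete rational $T$-variety of complexity one with divisorial fan $\S(\tau)$, so its $T$-eigenfunctions arise from two sources: characters $m\in M(\tau)$ of the quotient torus, and rational functions on $\P^1$ pulled back along the rational quotient $B_{\tau}\dashrightarrow\P^1$. The divisor of a character is computed by applying \eqref{eq:weil-div} to $B_{\tau}$: it splits into the vertical contribution $\sum_{F}\mu(v_F)\langle m,v_F\rangle Z_{p,F}$ over $F\in V_k$ with $\rec(F)=\tau$, and the horizontal contribution $\sum_{\sigma}\langle m,v_{\tau,\sigma}\rangle B_{\sigma}$ over $\sigma\in R_k$ with $\tau\prec\sigma$. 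Fixing $\infty\in\P^1$ as a basepoint identifies the remaining principal divisors on $\P^1$ with $\Z^P/\Z$, and by Example~\ref{exa:principal} a generator $[p]-[\infty]$ pulls back to exactly \eqref{eq:special}, with the multiplicities $\mu(v_F)$ encoding the ramification of $B_{\tau}\to\P^1$ along each vertex.

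The hard part is bookkeeping rather than conceptual: one must check that each eigenfunction divisor lands in the mixed basis $\Z^{V_k}\oplus\Z^{R_k}\oplus\Z^{T_k}$ with the correct multiplicities, correctly collapsing contracted classes via $r_*Z_{p,G}=(s_{\rec(G)}/\mu(G))W_{\rec(G)}$ whenever $\rec(G)\in K$, and conversely that no further relations arise. The latter is guaranteed because we have enumerated all $(k+1)$-dimensional $T$-stable subvarieties and, on each, exhausted the sources of $T$-eigenfunctions (torus characters plus, in the complexity-one case, pullbacks from $\P^1$ modulo constants). Exactness of \eqref{eq:chow-exact} then follows from the $T$-equivariant presentation $A_k^T(X)=Z_k^T(X)/R_k^T(X)$ together with Theorem~\ref{th:kron-dual}.
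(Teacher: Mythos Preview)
Your argument follows the same line as the paper's own treatment: the paper does not prove this theorem in full but cites \cite[Theorem~4.1]{UN} and remarks that surjectivity comes from Theorem~\ref{th:eff_cone} while exactness in the middle is obtained by taking divisors of rational functions (eigenfunctions) on invariant $(k+1)$-dimensional subvarieties. Your proposal expands exactly this sketch, case by case, and the computations you give for the three families match the stated maps.

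Two small corrections. First, your final sentence invokes Theorem~\ref{th:kron-dual}, but Kronecker duality plays no role here; what you need (and what you correctly used at the outset) is \cite[Theorem~1]{FMSS}, i.e.\ the isomorphism $A_k^T(X)\simeq A_k(X)$. Second, the claim that $V_{k+1}\cup R_{k+1}\cup T_{k+1}$ exhausts \emph{all} $(k+1)$-dimensional $T$-stable subvarieties is not literally true: there are also $Z_{p,F}$ with $F\in\S_p(n-k-1)$ and $\rec(F)\in K$. One must observe that the relations coming from eigenfunctions on such $Z_{p,F}$ are already consequences of those indexed by $T_{k+1}$ together with the identification $r_*Z_{p,F}=(s_{\rec(F)}/\mu(F))W_{\rec(F)}$ from \cite[Lemma~3.4]{UN}. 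The paper's remark glosses over this as well and defers to \emph{loc.~cit.}, so you are at the same level of detail, but the sentence as written overstates what has been checked.
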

\begin{proof}
This is \cite[Theorem~4.1]{UN}.
\end{proof}

\begin{rem}\label{rem:fin-pre}
\begin{enumerate}
\item[(i)] Note that the surjectivity of the map follows from Theorem~\ref{th:eff_cone}. The exactness of the sequence is then a consequence of the fact that the maps are exactly given
by choosing an invariant subvariety $Z$ of dimension $k + 1$, choosing a rational
function on $Z$ and taking its divisor. We refer for details to the proof in \emph{loc. cit.}
\item[(ii)]
The maps describe the relations between the invariant cycles in $A_k(X)$. For example, note that equation \eqref{eq:special} is saying that for $\tau \in R_{k+1}$, any two special fibers of $B_{\tau}$ are rationally equivalent. 
\item[(iii)] Since the set $P$ of points in $\mathbb{P}^1$ such that the polyhedral complex $\S_p$ is different from $\Sigma$ is finite, Theorem $\ref{th:chow-exact}$ gives a finite presentation of the Chow groups.
\end{enumerate}
\end{rem}


\section{Generalized Minkowski weights}\label{sec:gen-mink}
Let $X$ be a $(n+1)$-dimensional complete, rational T-variety of complexity one with divisorial fan $\S$ over $Y= \P^1$, recession fan $\Sigma$ in $N_{\Q}$ and subset $K \subseteq \Sigma$. Assume that $K$ is a proper subset of $\Sigma$, i.e. $K \neq \Sigma$. 

As before, for $p \in \P^1$ we write $\S_p$ for the corresponding complete polyhedral subdivision of $N_{\Q}$. Let $P$ be the set of points of $\P^1$ such that $\S_p \neq \Sigma$. $P$ is a finite set and we assume that $P$ has size at least two. 

For an integer $k \geq 0$ let $R_k, V_k, T_k$ be the sets defined in Section~\ref{sec:chow}. Recall that we have a Kroenecker duality map 
\[
A^k(X) \longrightarrow \Hom(A_k(X),\Z),
\]
which by Proposition~\ref{prop:kron-dual} is an isomorphism after tensoring with $\Q$ if $X$ is projective, $\Q$-factorial and contraction-free.

Then it follows from Theorem \ref{th:chow-exact} that to an element in $A^k(X)$ we can associate an integer-valued function on $R_k \bigcup V_k\bigcup T_k$ satisfying some properties, and that this association is bijective whenever the Kroenecker duality map is an isomorphism. We will describe such functions explicitly.
\begin{Def}\label{def:gen-mink}
A \emph{weight on $X$ of codimension $k$} is a map
\[
c \colon V_k\bigcup R_k \bigcup T_k \longrightarrow \Z.
\]
It is called a \emph{generalized Minkowski weight of codimension $k$} if it satisfies the following three conditions.
\begin{enumerate}
\item For $F \in V_{k+1}$ choose a point $p$ such that $Z_{p,F}$ corresponds to an invariant $(k+1)$-dimensional subvariety of a toric varitety with lattice of characters $M(F)$. Then for all $m \in M(F)$
\[
\sum_{\substack{G \in S_p(n-k) \\ F \prec G, \; \rec(G) \notin K}} \left\langle m, v_{F,G}\right\rangle c(G) + \sum_{\substack{G \in S_p(n-k) \\ F \prec G, \; \rec(G) \in K}} \left\langle m, v_{F,G}\right\rangle \frac{s_{\rec(G)}}{\mu(G)} c(\rec(G)) = 0.
\]
\item Let $\tau \in R_{k+1}$. Then for all $m \in M(\tau)$
\begin{align}\label{eq:cond_2.1}
& \sum_{\substack{F \in V_k\\ \rec(F) = \tau}} \mu(v_F)\left\langle m, v_F\right\rangle c(F) + \sum_{\substack{\sigma \in R_k \\ \tau \prec \sigma}}\left\langle m, v_{\tau,\sigma}\right\rangle c(\sigma) = 0. 
\end{align}
Assume $\infty \in P$. Then for any point $p\in P$ 
\begin{align}\label{eq:cond_2.2}
\sum_{\substack{F \in S_p(n-k)\\ \rec(F) = \tau}} \mu(v_F)c(F) - \sum_{\substack{F \in S_{\infty}(n-k) \\ \rec(F) = \tau}} \mu(v_F) c(F) = 0.
\end{align}

\item For all $\tau \in T_{k+1}$ and for all $m \in M(\tau)$ 
\[
\sum_{\substack{\sigma \in \Sigma(n-k) \\ \tau \prec \sigma}}\left\langle m, v_{\tau,\sigma}\right\rangle c(\sigma) = 0.
\]
\end{enumerate}
We denote by $W_k(X)$ the set of weights on $X$ of codimension $k$. It is an abelian group under the addition of functions. We denote by $M_k(X)$ the subgroup of generalized Minkowski weight of codimension $k$ and we refer to the linear conditions (1), (2) and (3) as \emph{balancing conditions}. 
\end{Def} 
\begin{rem}
The term \emph{balanced} comes from tropical geometry, where it plays a fundamental role. Indeed, tropical cycles are always \emph{balanced} and this condition is necessary for example in order to have a well-defined tropical intersection theory.
\end{rem}
\begin{Def}
We define a tropicalization map 
\[
\on{trop} \colon A^k(X) \longrightarrow W_k(X)
\]
by
\[
 z \longmapsto \on{trop}(z),
\]
where 
\[
\on{trop}(z)\colon V_k\bigcup R_k \bigcup T_k \longrightarrow \Z
\]
is given by 
\[
\gamma \longmapsto z \cap \left[V_{\gamma}\right].
\]
Here, $V_{\gamma}$ denotes the invariant subvariety corresponding to $\gamma$. 
\end{Def}
The following is our first main result. 
\begin{theorem}\label{thm:trop}
Let $X$ be a complete rational T-variety of complexity one. 
Then the tropicalization map
\[
\on{trop}\colon  A^k(X) \longrightarrow M_k(X)
\]
 is well-defined. If $X$ is projective, $\Q$-factorial and contraction-free, then the induced tropicalization map 
 \begin{equation}\label{eq:tropi}
\on{trop}\colon  A^k(X)_{\Q} \longrightarrow M_k(X)_{\Q},
\end{equation}
where $M_k(X)_{\Q}$ denotes $\Q$-valued generalized Minkowski weights, is a bijection.
\end{theorem}
\begin{proof}
By the above discussion, this follows directly from Proposition~\ref{prop:kron-dual} and Theorem~\ref{th:chow-exact}.
\end{proof}
We will now study the restriction of weights to invariant subvarieties.
\begin{Def}
Let $c \in W_k(X)$. 
\begin{itemize}
\item For $\tau \in \Sigma$ let $\iota \colon B_{\tau} \hookrightarrow \tilde{X}$ the inclusion of the corresponding subvariety. Recall that $\dim(B_{\tau}) =\codim(\tau) +1$. We define a weight $c^{\tau}$ on $B_{\tau}$ in the following way. Consider the sets
\begin{align*}
V_k(\tau) &= \left\{ F \in S_p(n-k), \; p \in P \text{ such that }\tau \prec \rec(F), \; \rec(F) \notin K\right\},\\
R_k(\tau) &= \left\{ \sigma \in \Sigma(n-k+1), \; \tau \prec \sigma, \; \sigma \notin K \right\},\\
T_k(\tau) &= \left\{ \sigma \in \Sigma(n-k), \; \tau \prec \sigma, \; \sigma \in K \right\}.
\end{align*}
Then 
\[
c^{\tau} \colon V_k(\tau) \bigcup R_k(\tau) \bigcup T_k(\tau) \longrightarrow \Z
\]
is given by 
\begin{align*}
\gamma \longmapsto c(\gamma).
\end{align*}
The weight $c^{\tau}$ can be thought of as the restriction of $c$ to the invariant subvariety $B_{\tau}$. 

\item On the other hand, let $p \in \P^1$ and $G \in S_p$ such that $Z_{p,G}$ is an invariant subvariety of a toric subvariety $X_{\Sigma(p,G)}$. Then if $\rec(G) \notin K$, we define $c^{p,G}$ as a function
\[
c^{p,G} \colon \Sigma(p,G) =\left\{ F \in S_p(n-k), \; G \prec F \right\} \longrightarrow \Z
\]
defined by
\[
 F \longmapsto c^{p,G}(F),
\]
where $c^{p,G}(F) = c(F)$ if $\rec(F) \notin K$. If $\rec(F) \in K$, then
by the proof of Theorem \ref{th:chow-exact}, one can write $Z_{p,F}$ in $A_k(X)$ as an integral linear combination 
\[
Z_{p,F} = \sum_{\gamma}a_{\gamma}V_{\gamma},
\]
the sum being over all $\gamma$ in $V_k \bigcup R_k \bigcup T_k$. Then 
we set
\[
c^{p,G}(F) = \sum_{\gamma}a_{\gamma}c(\gamma).
\]
Then $c^{p,G}$ can be thought of as the restriction of $c$ to the invariant subvariety $F_{p,G}$.
\end{itemize}
\end{Def}
\begin{lemma}
Let $c \in M_k(X)$ be a generalized Minkowski weight. 
\begin{enumerate}
\item For any $\tau \in\Sigma$ the restriction $c^{\tau}$ is a generalized Minkowski weight in $M_k(B_{\tau})$. 
\item For any point $p \in \P^1$ and $G \in S_p$ such that $Z_{p,G}$ is an invariant subvariety of a toric subvariety $X_{\Sigma(p,G)}$, and such that $\rec(G) \notin K$, the restriction $c^{p,G}$ is a Minkowski weight on the fan $\Sigma(p,G)$ in the sense of \cite{FS}.
\end{enumerate}
\end{lemma}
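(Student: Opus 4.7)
The plan is to exhibit both $c^\tau$ and $c^{p,G}$ as (generalized) Minkowski weights arising through Kronecker duality from appropriate pullbacks of the Chow cohomology class $z \in A^k(X)$ associated to $c$ via Theorem~\ref{thm:trop}. The two principal tools are the projection formula for cap products and the description of the contraction map $r \colon \widetilde X \to X$ on invariant cycles recalled in Section~\ref{sec:subvarieties}.

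For part~(1), I would consider the proper morphism $f\colon B_\tau \to X$ obtained by composing the closed immersion $\iota \colon B_\tau \hookrightarrow \widetilde X$ with $r$, and set $z_\tau \coloneqq f^*z \in A^k(B_\tau)$. Since $B_\tau$ is itself a complete rational $T$-variety of complexity one, applying Theorem~\ref{thm:trop} to $B_\tau$ produces a corresponding generalized Minkowski weight $c_\tau \in M_k(B_\tau)$. To conclude that $c_\tau = c^\tau$, one pairs with each generator $[V^{B_\tau}_{\gamma'}]$ of $A_k(B_\tau)$ from Theorem~\ref{th:chow-exact}: the projection formula gives
\[
c_\tau(\gamma') \;=\; \deg\!\bigl(f^*z \cap [V^{B_\tau}_{\gamma'}]\bigr) \;=\; \deg\!\bigl(z \cap f_*[V^{B_\tau}_{\gamma'}]\bigr),
\]
and an unravelling of the combinatorics in Section~\ref{sec:subvarieties} identifies $f_*[V^{B_\tau}_{\gamma'}]$, case by case over $\gamma' \in V_k(\tau) \cup R_k(\tau) \cup T_k(\tau)$, with the invariant cycle $[V_\gamma]_X$ attached to the same combinatorial datum $\gamma$. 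Hence $c_\tau(\gamma') = c(\gamma) = c^\tau(\gamma')$, so $c^\tau$ is indeed a generalized Minkowski weight in $M_k(B_\tau)$.

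Part~(2) proceeds in exact analogy, now with the closed immersion $j\colon X_{\Sigma(p,G)} \hookrightarrow \widetilde X$ in place of $\iota$ and $g \coloneqq r\circ j$. The pullback $g^*z$ is a Chow cohomology class on the complete toric variety $X_{\Sigma(p,G)}$, and the Fulton--Sturmfels theorem converts it into a Minkowski weight on the fan $\Sigma(p,G)$. Evaluated on $F/G \in \Sigma(p,G)$, the projection formula yields $\deg(z \cap g_*[Z_{p,F}])$. When $\rec(F) \notin K$, $g_*[Z_{p,F}] = [Z_{p,F}]_X$ and the value is $c(F)$; when $\rec(F) \in K$, the remark following Theorem~\ref{th:eff_cone} (via \cite[Lemma~3.4]{UN}) identifies $g_*[Z_{p,F}] = (s_{\rec(F)}/\mu(F))\,[W_{\rec(F)}]_X$ and the value is $(s_{\rec(F)}/\mu(F))\,c(\rec(F))$. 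Both values coincide with $c^{p,G}(F)$ by definition, so the Minkowski weight produced by Fulton--Sturmfels is precisely $c^{p,G}$.

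The main obstacle is the explicit identification of $g_*[Z_{p,F}]$ in the contracted case of part~(2): it reflects the fact that $r$ restricted to $Z_{p,F}$ is a generically finite cover of $W_{\rec(F)}$ of degree $s_{\rec(F)}/\mu(F)$, which is the content of \cite[Lemma~3.4]{UN}. Because the abstract decomposition $Z_{p,F} = \sum_\gamma a_\gamma V_\gamma$ used to define $c^{p,G}(F)$ is not intrinsically unique in $A_k(X)$, one additionally needs the observation that $c \in M_k(X)$, viewed through Kronecker duality, pairs to zero with every relation in $A_k(X)$, so that the value $\sum_\gamma a_\gamma c(\gamma)$ is independent of the chosen decomposition; combined with the explicit description from the proof of Theorem~\ref{th:chow-exact} this pins down the stated value.
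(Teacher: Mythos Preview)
Your proposal is correct and follows essentially the same route as the paper's proof: identify the restricted weight as the tropicalization of the pullback of the cohomology class $z$ to the subvariety, and verify this value-by-value using the projection formula. You are in fact slightly more careful than the paper in two respects---you explicitly compose with the contraction $r$ (needed since $z \in A^k(X)$ while $\iota$ and $j$ land in $\widetilde X$), and you spell out the contracted case of part~(2) together with the well-definedness of $c^{p,G}(F)$---but these are elaborations of the same argument rather than a different approach.
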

\begin{proof}
Let $c = \on{trop}(z)$ for some $z \in A^k(X)$. 

For $(1)$ we claim that
\[
\on{trop}(\iota^* z) = c^{\tau}.
\]
Indeed, it follows from the description of $B_{\tau}$ as a T-variety that the sequence 
\[
\Z^{V_k(\tau)} \oplus \Z^{R_k(\tau)} \oplus \Z^{T_k(\tau)} \longrightarrow A_k(B_{\tau}) \longrightarrow 0
\]
is exact. Hence, by construction, the domain of $\on{trop}(\iota^* z) \in M_k(B_{\tau})$ is indeed $V_k(\tau) \bigcup R_k(\tau) \bigcup T_k(\tau)$. 

Let $\gamma \in V_k(\tau) \bigcup R_k(\tau) \bigcup T_k(\tau)$. We have to check that 
\[
\on{trop}(\iota^*Z)(\gamma) = c^{\tau}(\gamma).
\]
But this follows from the definition of $c^{\tau}$ and the projection formula. 

For $(2)$ we proceed similarly. Let $j \colon X_{\Sigma(p,G)} \hookrightarrow \tilde{X}$ be the inclusion of the toric subvariety. Then $\on{trop}(j^*z)$ is a Minkowski weight on the fan $\Sigma(p,G)$ in the sense of \cite{FS}. We claim that 
\[
\on{trop}(j^*Z) = c^{p,G}.
\]
Indeed, we have to check that for $F \in S_p(n-k)$ with $G \prec F$, we have that 
\[
\on{trop}(j^*Z)(F) = c^{p,G}(F).
\]
Again, this follows from the definition of $c^{p,G}$ and the projection formula. 

\end{proof}
\section{Intersecting with Cartier divisors}\label{sec:intersect}
Let $X$ be a $(n+1)$-dimensional complete, rational T-variety of complexity one with divisorial fan $\S$ over $Y= \P^1$ and recession fan $\Sigma$ in $N_{\Q}$. In this section we assume that $X$ is projective, $\Q$-factorial and contraction free.

As before, for $p \in \P^1$ we write $\S_p$ for the corresponding complete polyhedral subdivision of $N_{\Q}$. Let $P$ be the set of points of $\P^1$ such that $\S_p \neq \Sigma$. $P$ is a finite set and we assume that $P$ has size at least two. 

The goal of this section is to define a pairing
\[
\CaSF(\S) \times M_k(X) \longrightarrow M_{k+1}(X), \quad (h, c) \longmapsto h \cdot c
\] 
and to show that it is compatible with the isomorphism \eqref{eq:tropi} from Theorem \ref{thm:trop}.

We can think of this pairing as a rational analogue of the so called \emph{corner locus} of a tropical cycle in the sense of \cite[Section~6]{AR}.

In order to define this pairing we use the combinatorial data of $\S$ and $\Sigma$. Note that since we are assuming that $X$ is contraction free, we have that $T_k(X) = \emptyset$ for any integer $k \geq 0$. 
\begin{Def}
Let $h = \left(h_p\right)_{p\in \P^1} \in \CaSF(\S)$ be a Cartier divisorial support function on $\S$ with recession function $\rec(h) = \overline{h}$.  And let $c \in M_k(X)$ a generalized Minkowski weight of codimension $k$. First we define a paring 
\begin{align}\label{eq:pairing}
\CaSF(S) \times M_k(X) \longrightarrow W_{k+1}(X), \quad (h,c) \longmapsto h\cdot c,
\end{align}
where $h \cdot c \colon V_{k+1} \bigcup R_{k+1} \to \Z$ is defined in the following way.
\begin{itemize}
\item Let $F \in V_{k+1}$. Choose a point $p \in P$ such that $Z_{p,F}$ corresponds to an invariant subvariety of a toric variety. Then we set

\begin{align*}
(h \cdot c)(F) = \sum_{\substack{G \in S_p(n-k) \\ F \prec G}} -h_{p,F}(v_{F,G})c(G),
\end{align*}
where $h_{p,F}$ is the function defined in Definition \ref{def:restriction-div}.
\item Let $\tau \in R_{k+1}$. For any $\sigma \in R_k$ containing $\tau$ and for any $F \in V_k$ with $\rec(F) = \tau$ we consider lifts of $v_{\tau,\sigma}$ and $v_F$ in $N_{\R}$. We denote these lifts by the same symbols, in order to avoid the notation becoming even heavier. Then we set
\begin{align*}
 (h \cdot c)(\tau) = & \sum_{\substack{F \in V_k \\ \rec(F) = \tau}}-\mu(v_F)h(v_F)c(F) + \sum_{\substack{\sigma \in R_k \\ \tau \prec \sigma}}-\overline{h}_{\sigma}(v_{\tau,\sigma}) c(\sigma)  \\
& + \overline{h}_{\tau}\left(\sum_{\substack{F \in V_k \\ \rec(F) = \tau}} \mu(v_F)v_Fc(F) + \sum_{\substack{\sigma \in R_k \\ \tau \prec \sigma}}v_{\tau,\sigma}c(\sigma)\right),
\end{align*} 
where for $\gamma \in \Sigma$, $\overline{h}_{\gamma}$ denotes a linear function on $N_{\R}$ which agrees with $\overline{h}$ on $\gamma$.

\end{itemize}
\end{Def}
Note that since $c$ is a generalized Minkowski weight, the pairing \eqref{eq:pairing} is well defined, i.e.~it does not depend on the choice of lifts.

We will now proceed to show that $h \cdot c \in W_{k+1}(X)$ satisfies the balancing conditions (1),(2) and (3) from Definition \ref{def:gen-mink}, and is thus a generalized Minkowski weight. This will follow from the following three important lemmas. 
\begin{lemma}\label{lem:restriction}
Let $c \in M_k(X)$ and $h \in \CaSF(S)$. Then the following holds true. 
\begin{enumerate}
\item\label{it1} For all $\tau \in \Sigma$ we have
\[
h(\tau) \cdot c^{\tau} = (h \cdot c)^{\tau}
\]
as weights in $W_{k+1}(B_{\tau})$, where $h(\tau)$ denotes the function in Definition \ref{def:restriction-div}.
\item\label{it2} Let $p \in P$ and $G \in S_p$ such that $Z_{p,G}$ is an invariant subvariety of a toric subvariety $X_{\Sigma(p,G)}$. Then 
\[
h_{p,G} \cdot c^{p,G} = (h \cdot c)^{p,G}
\]
as Minkowski weights in the fan $\Sigma(p,G)$. Here, the intersection pairing $h_{p,G} \cdot c^{p,G}$ is the classical one for fans, as defined in \cite[Definition 3.4]{AR}. 
\end{enumerate}

\end{lemma}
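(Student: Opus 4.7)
The plan is to verify both identities by unwinding the defining formulas and comparing them term by term on the relevant index sets. The central structural input is that the combinatorial data of the subvariety $B_\tau$ (respectively $X_{\Sigma(p,G)}$) is obtained from that of $X$ by an explicit projection/restriction procedure recalled in Section~\ref{sec:subvarieties}, and that the divisorial Cartier support function $h(\tau)$ (respectively the virtual support function $h_{p,G}$) is obtained by the matching restriction procedure from Definition~\ref{def:restriction-div}.

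For part \eqref{it1}, I would first match the index set $V_{k+1}(\tau) \cup R_{k+1}(\tau)$ on which $(h \cdot c)^\tau$ is defined with the analogous index set on which the codimension-$(k+1)$ weight $h(\tau) \cdot c^\tau$ on $B_\tau$ is defined: polyhedra $F \in V_{k+1}(\tau)$ correspond to polyhedra $F^\tau$ in the slices of the divisorial fan of $B_\tau$, and cones $\sigma \in R_{k+1}(\tau)$ correspond to cones $\sigma/N_\tau$ in its recession fan. I would then check three compatibility facts: first, iterated restriction of Cartier data gives $h(\tau)_{p,F^\tau} = h_{p,F}$, which holds because $X_{\Sigma(p,F)}$ is a common toric subvariety of $\tilde X$ and of $B_\tau$; second, the recession function of $h(\tau)$ on the quotient cone $\sigma/N_\tau$ coincides with $\overline h_\sigma$ after subtracting off the linear piece of $\overline h$ along $\tau$; third, the lattice normal vectors $v_{F,G}$ and $v_{\tau',\sigma}$ project to the analogous vectors for the combinatorial data of $B_\tau$. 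With these in hand, the defining sums for $h(\tau) \cdot c^\tau(F)$ and for $h(\tau) \cdot c^\tau(\sigma)$ reduce term by term to those for $(h \cdot c)(F)$ and $(h \cdot c)(\sigma)$.

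Part \eqref{it2} is considerably simpler because $X_{\Sigma(p,G)}$ is a toric variety: $c^{p,G}$ is a Minkowski weight on $\Sigma(p,G)$ in the classical sense of \cite{FS}, and $h_{p,G} \cdot c^{p,G}$ is the classical intersection pairing of \cite[Definition 3.4]{AR}, whose defining formula matches exactly the first branch of the definition of $h \cdot c$ once cones of $\Sigma(p,G)$ are identified with polyhedra of $\S_p$ having $G$ as a face. Hence the equality here is essentially definitional, modulo verifying the identification $h_{p,G,F^G} = h_{p,F}$ for such $F$.

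The main obstacle will be the cone-case of part \eqref{it1}, namely matching the correction term $\overline h_\sigma\bigl(\sum_F \mu(v_F)v_F c(F) + \sum_{\sigma'} v_{\sigma,\sigma'}c(\sigma')\bigr)$ appearing in $(h \cdot c)(\sigma)$ with the analogous correction term computed intrinsically inside $B_\tau$. The delicate point is that the balancing vector lives a priori in $N_\Q$ but, by condition~\eqref{eq:cond_2.1} satisfied by $c$, has zero image in $N(\sigma)_\Q$, hence defines a canonical element of $N_\sigma \otimes \Q$ on which $\overline h_\sigma$ is evaluated. One must show that under restriction to $B_\tau$ the analogous lift in $(N_\sigma/N_\tau) \otimes \Q$ relates to the original lift via the quotient map $N_\sigma \to N_\sigma/N_\tau$, and that $\overline h_\sigma$ and $\overline{h(\tau)}_{\sigma/N_\tau}$ differ only by a linear form on $N_\tau$ which, when evaluated against the balancing vector, vanishes thanks again to the generalized balancing of $c$.
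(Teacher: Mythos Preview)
Your approach is correct and parallels the paper's: both proofs verify the identities by unwinding the defining formulas and comparing term by term on $V_{k+1}(\tau)\cup R_{k+1}(\tau)$, and both dispatch part~\eqref{it2} as essentially definitional (the paper simply cites toric geometry and functoriality of the pullback).

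The one substantive difference is how the correction term in the cone case of part~\eqref{it1} is handled. You propose to track directly how $\overline h_\sigma$ and $\overline{h(\tau)}_{\sigma/N_\tau}$ differ by a linear form supported on $N_\tau$, and then argue that this discrepancy cancels against the corresponding discrepancy in the lifted balancing vector. The paper instead sidesteps this bookkeeping entirely: it first treats the special case $\overline h|_\tau=0$, where all the restricted functions agree on the nose and the verification is immediate, and then reduces the general case to this one by replacing $h$ with $h-m_\tau$ for $m_\tau\in M$ extending $\overline h|_\tau$. Since $m_\tau$ is globally linear, one checks directly from the defining formulas (and the balancing of $c$) that $(m_\tau\cdot c)=0$, so $(h\cdot c)^\tau=((h-m_\tau)\cdot c)^\tau$; and $(h-m_\tau)(\tau)=h(\tau)$ since the linear shift becomes trivial after restriction. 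Your direct route would also work, but the final sentence of your obstacle paragraph is slightly imprecise: the vanishing you need is not a single application of balancing to the lifted vector, but rather a cancellation between the $m_\tau$-contributions coming from the $\overline h_\sigma$-sum and those coming from the correction term---exactly the cancellation that the paper's linearity reduction packages into the statement $m_\tau\cdot c=0$.
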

\begin{proof}
Part \ref{it2} follows from toric geometry and the compatibility of the intersection pairing with the pullback (see e.g. \cite[Chapter~17]{fulint}). 

For part \ref{it1} we first assume that $\overline{h}|_{\tau} = 0$. We have to show that for all $\omega \in R_{k+1}(\tau)\bigcup V_{k+1}(\tau)$ we have that 
\[
\left(h(\tau)\cdot c^{\tau}\right)(\omega) = (h \cdot c)^{\tau}(\omega).
\] 
\begin{itemize}
\item Let $F \in V_{k+1}(\tau)$ and choose a point $p \in P$ such that $Z_{p,F}$ corresponds to an invariant subvariety of a toric variety.Then 
\begin{align*}
(h \cdot c)^{\tau}(F) &=  (h \cdot c)(F) \\
& = \sum_{\substack{G \in S_p(n-k) \\ F \prec G}} -h_{p,F}(v_{F,G})c(G) \\
&= \sum_{\substack{G \in S_p(n-k)\\ F \prec G}} -h(\tau)_{p,F}(v_{F,G})c^{\tau}(G) \\
&= \left(h(\tau) \cdot c^{\tau}\right)(F)
\end{align*}

\item Let $\gamma \in R_{k+1}(\tau)$. Then 
\begin{align*}
& (h \cdot c)^{\tau}(\gamma) =  (h \cdot c)(\gamma) \\ = &\sum_{\substack{F \in V_k \\ \rec(F) = \gamma}}-\mu(v_F)h(v_F)c(F) + \sum_{\substack{\sigma \in R_k \\ \gamma \prec \sigma}}-\overline{h}_{\sigma}(v_{\gamma,\sigma}) c(\sigma)  \\
&  + \overline{h}_{\gamma}\left(\sum_{\substack{F \in V_k \\ \rec(F) = \gamma}} \mu(v_F)v_Fc(F) + \sum_{\substack{\sigma \in R_k \\ \gamma \prec \sigma}}v_{\gamma,\sigma}c(\sigma)\right) \\
=& \sum_{\substack{F \in V_k(\tau) \\ \rec(F) = \gamma}}-\mu(v_F)h(\tau)(v_F)c^{\tau}(F) + \sum_{\substack{\sigma \in R_k(\tau) \\ \gamma \prec \sigma}}-\overline{h(\tau)}_{\sigma}(v_{\gamma,\sigma}) c^{\tau}(\sigma)  \\
& + \overline{h(\tau)}_{\gamma}\left(\sum_{\substack{F \in V_k(\tau) \\ \rec(F) = \gamma}} \mu(v_F)v_Fc^{\tau}(F) + \sum_{\substack{\sigma \in R_k(\tau) \\ \gamma \prec \sigma}}v_{\gamma,\sigma}c^{\tau}(\sigma)\right)\\
=& \left(h(\tau)\cdot c^{\tau}\right)(\gamma).
\end{align*}
If $\overline{h}|_{\tau} = m_{\tau} \neq 0$ then we consider the divisorial support function $h - m_{\tau}$. Then by linearity of $m_{\tau}$ we have that for all $\omega\in R_{k+1}(\tau)\bigcup V_{k+1}(\tau)$
\[
\left(h \cdot c \right)^{\tau}(\omega) = \left((h-m_{\tau})\cdot c\right)^{\tau}(\omega) = \left((h-m_{\tau})(\tau)\cdot c^{\tau}\right)(\omega) = \left(h(\tau)\cdot c^{\tau}\right)(\omega).
\]
This finishes the proof of the lemma.

\end{itemize}
\end{proof}

\begin{lemma}\label{lem:prod}
Let $h, h' \in \CaSF(\S)$. Then for any $c \in M_k(X)$ the following holds true. \begin{enumerate}
\item\label{prod1} $(h + h')\cdot c = h\cdot c + h' \cdot c$,
\item\label{prod2} $ h\cdot (h' \cdot c) = h' \cdot (h \cdot c)$.
\end{enumerate}
\end{lemma}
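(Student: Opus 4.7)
For part (\ref{prod1}), the defining formulas for $h\cdot c(F)$ and $h\cdot c(\tau)$ are built out of the assignments $h\mapsto h_{p,F}$, $h\mapsto h(\tau)$ and $h\mapsto\overline{h}$, each of which is an additive group homomorphism, composed with evaluations at lattice vectors, which are $\Z$-linear. Every summand in both formulas is therefore linear in $h$, and the identity $(h+h')\cdot c = h\cdot c + h'\cdot c$ drops out term by term. This step is routine bookkeeping and I expect no difficulty.

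For part (\ref{prod2}), my plan is to verify the identity $\bigl(h\cdot(h'\cdot c)\bigr)(\omega) = \bigl(h'\cdot(h\cdot c)\bigr)(\omega)$ pointwise at each $\omega\in V_{k+2}\cup R_{k+2}$, reducing each case to a known commutativity. If $\omega = F \in V_{k+2}$, then $Z_{p,F}$ sits as an invariant subvariety of the toric subvariety $X_{\Sigma(p,F)}$. Applying Lemma~\ref{lem:restriction}(\ref{it2}) twice transports both iterated pairings to the classical intersection of the piecewise-linear functions $h_{p,F}$, $h'_{p,F}$ against the toric Minkowski weight $c^{p,F}$ on the fan $\Sigma(p,F)$. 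Commutativity of this toric pairing is part of the graded ring structure on $M_*$ of a toric variety established in \cite{FS}, and the claim follows by evaluating at $F$.

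If $\omega = \tau \in R_{k+2}$, my plan is to restrict to $B_\tau$ using Lemma~\ref{lem:restriction}(\ref{it1}), yielding
\[
\bigl(h\cdot(h'\cdot c)\bigr)^\tau = h(\tau)\cdot\bigl(h'(\tau)\cdot c^\tau\bigr), \qquad \bigl(h'\cdot(h\cdot c)\bigr)^\tau = h'(\tau)\cdot\bigl(h(\tau)\cdot c^\tau\bigr).
\]
Since $B_\tau$ is itself a complete rational T-variety of complexity one of dimension strictly less than $\dim X$, the horizontal case reduces to the same commutativity statement on $B_\tau$, and I plan to close the loop by induction on $\dim X$, with base case $n=1$ a finite check directly from the definitions.

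The main obstacle is exactly the horizontal case, because strictly speaking Lemma~\ref{lem:restriction}(\ref{it1}) applies to $c\in M_k(X)$, whereas the outer restriction requires $h'\cdot c$ to already be a generalized Minkowski weight, an assertion whose proof is running in parallel with the present lemma. I plan to sidestep this by inspecting the proof of Lemma~\ref{lem:restriction}(\ref{it1}) and verifying that the restriction identity survives at the level of arbitrary weights (as only the balancing of $c$, not of $h'\cdot c$, enters the argument); failing that, by expanding both iterated sums explicitly in terms of $h(\tau)$, $\overline{h}_\sigma$, $h(\tau)(v_F)$ and their primed analogues, and matching contributions that are manifestly symmetric in $(h,h')$. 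The latter route is formal but combinatorially heavy because of the mixing between vertical sums over $V_{k+1}$ and horizontal sums over $R_{k+1}$.
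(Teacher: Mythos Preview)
Your vertical case differs from the paper's: you reduce via Lemma~\ref{lem:restriction}(\ref{it2}) to the known commutativity of the tropical product on a fan, whereas the paper writes out both iterated sums explicitly, re-expresses the inner normal vectors $v_{G,H}$ in the basis $v_{F,G},v_{F,G'}$, and swaps $G\leftrightarrow G'$ to exhibit symmetry. Your route is shorter, but it shares the circularity you flagged for the horizontal case (applying the restriction lemma to $h'\cdot c$, which is not yet known to be balanced), so you will need to verify that the identity in Lemma~\ref{lem:restriction}(\ref{it2}) persists for arbitrary weights in the contraction-free setting before you can invoke it.

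Your horizontal induction does not close as stated. Restriction to $B_\tau$ only helps when $\dim\tau>0$; after one step you land at the cone $\{0\}$ on $B_\tau$, which has dimension $k+2$ and $n_\tau=k+1$, and no further restriction is possible because $B_{\{0\}}=B_\tau$. Thus the case $\tau=\{0\}$ must be handled directly for \emph{every} value of $k$ (equivalently, for every $n=k+1$), not just in the base case $n=1$, $k=0$. The paper proceeds exactly this way: it uses Lemma~\ref{lem:restriction} once to reduce to $\tau=\{0\}$ and then performs the explicit computation you sketched as a fallback, splitting $h\cdot(h'\cdot c)(\{0\})$ into a vertical part $A$ and a horizontal part $B$, rewriting each over unbounded edges of the slices, and extracting a term $Z(h,h')$ that is visibly symmetric in $h,h'$. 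So your ``failing that'' plan is in fact the required argument, and the induction scaffolding above it should be replaced by a single reduction step.
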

\begin{proof}
Part \ref{prod1} follows easily from the definition of the pairing.
We now prove part~\ref{prod2}. As before, we have to show that 
\[
\left(h \cdot (h' \cdot c)\right)(\omega) = \left(h' \cdot (h \cdot c)\right)(\omega)
\]
for any $\omega \in V_{k+2} \bigcup R_{k+2}$. 
\begin{itemize}
\item Let $F \in V_{k+2}$ and choose a point $p \in P$ such that $Z_{p,F}$ corresponds to an invariant subvariety of a toric variety. 
We compute 
\begin{align*}
& \left(h \cdot (h' \cdot c)\right)(F) = \sum_{\substack{G \in S_p(n-k-1)\\ F \prec G}} -h_{p,F}(v_{F,G})(h' \cdot c)(G) \\
=& \sum_{\substack{G \in S_p(n-k-1)\\ F \prec G}} -h_{p,F}(v_{F,G}) \left(\sum_{\substack{H \in S_p(n-k) \\ G \prec H}}- h'_{p,G}(v_{G,H})c(H)\right).
\end{align*}
Now, for $H \in S_p(n-k)$, $G \in S_p(n-k-1)$ with $F \prec G \prec H$ we let $G' \in S_p(n-k-1)$ the unique polyhedron such that $F \prec G'\prec H$. Let $\alpha_{F,H}$ and $\beta_{F,H}$ be elements such that 
\begin{align}\label{eq:v1}
v_{G,H} = \alpha_{F,H}v_{F,G} +  \beta_{F,H}v_{F,G'}
\end{align}
and
\begin{align}\label{eq:v2}
v_{G',H} = \alpha_{F,H}v_{F,G'} +  \beta_{F,H}v_{F,G}.
\end{align}

Using \eqref{eq:v1} we obtain 
\begin{align*}
\left(h \cdot (h' \cdot c)\right)(F)   = &\sum_{\substack{G \in S_p(n-k-1)\\ F \prec G}} -h_{p,F}(v_{F,G}) \left(\sum_{\substack{H \in S_p(n-k) \\ G \prec H}}- h'_{p,G}(v_{G,H})c(H)\right) \\
= &   \sum_{\substack{G \in S_p(n-k-1)\\ F \prec G}} \sum_{\substack{H \in S_p(n-k)\\ G \prec H}} c(H)h_{p,F}(v_{F,G})h'_{p,G}(v_{G,H}) \\
=& \sum_{\substack{G \in S_p(n-k-1)\\ F \prec G}} \sum_{\substack{H \in S_p(n-k)\\ G \prec H}} c(H)\alpha_{F,H}h_{p,F}(v_{F,G})h'_{p,F}(v_{F,G}) \\
& + \sum_{\substack{G \in S_p(n-k-1)\\ F \prec G}} \sum_{\substack{H \in S_p(n-k)\\ G \prec H}} c(H)\beta_{F,H}h_{p,F}(v_{F,G})h'_{p,F}(v_{F,G'})\\
=& \sum_{\substack{H \in S_p(n-k)\\F \prec H}}  c(H) \alpha_{F,H}\left(\sum_{\substack{G \in S_p(n-k-1)\\ F \prec G \prec H}}h_{p,F}(v_{F,G})h'_{p,F}(v_{F,G})\right) \\
&+ c(H) \beta_{F,H} \left(\sum_{\substack{G \in S_p(n-k-1)\\ F \prec G \prec H}}h_{p,F}(v_{F,G})h'_{p,F}(v_{F,G'})\right).
\end{align*}
Next, we interchange the roles of $G$ and $G'$ in the inner sum of the second summand of the last expression and revert the argument. Using \eqref{eq:v2} we get
\begin{align*}
\left(h \cdot (h' \cdot c)\right)(F) 
=& \sum_{\substack{H \in S_p(n-k)\\F \prec H}}  c(H) \alpha_{F,H}\left(\sum_{\substack{G \in S_p(n-k-1)\\ F \prec G \prec H}}h_{p,F}(v_{F,G})h'_{p,F}(v_{F,G})\right) \\
&+ c(H) \beta_{F,H} \left(\sum_{\substack{G \in S_p(n-k-1)\\ F \prec G \prec H}}h_{p,F}(v_{F,G'})h'_{p,F}(v_{F,G})\right)\\
=& \sum_{\substack{G \in S_p(n-k-1)\\ F \prec G}} \sum_{\substack{H \in S_p(n-k)\\ G \prec H}} c(H)h'_{p,F}(v_{F,G})h_{p,G}\left(\alpha_{F,H}v_{F,G} + \beta_{F,H}v_{F,G'}\right) \\
=& \sum_{\substack{G \in S_p(n-k-1)\\ F \prec G}} \sum_{\substack{H \in S_p(n-k)\\ G \prec H}} c(H)h'_{p,F}(v_{F,G})h_{p,G}(v_{G,H})\\
= &\; \left(h' \cdot (h \cdot c)\right)(F),
\end{align*}
which is what we wanted to show. 
\item Let $\tau \in R_{k+2}$. By the restriction Lemma \ref{lem:restriction} we may assume $n=2$, $k=1$. Then
\begin{align*}
\left(h \cdot(h'\cdot c)\right)(\{0\}) = \sum_{F \in V_2}-\mu(v_F)h(v_F)(h'\cdot c)(F) + \sum_{\sigma \in R_2}-\overline{h}_{\sigma}(v_{\sigma})(h' \cdot c)(\sigma).
\end{align*}
Let 
\[
A\coloneqq \sum_{F \in V_2}-\mu(v_F)h(v_F)(h'\cdot c)(F) \; \;  \text{ and } \; \;
B\coloneqq \sum_{\sigma \in R_2}-\overline{h}_{\sigma}(v_{\sigma})(h' \cdot c)(\sigma).
\]
By definition of the pairing, we have 
\begin{align*}
A = \sum_{F \in V_2}-\mu(v_F)h(v_F)\left(\sum_{\substack{G \in S_p(1) \\ F \prec G}}-h'_{p,F}(v_{F,G})c(G) \right).
\end{align*}
We can write the above sum as a sum over all one-dimensional faces $G$ in $S_p(1)$ by distinguishing whether $\rec(G) = \{0\}$ or $\rec(G) \in \Sigma(1)$. Note however that in the former case, the terms in the sum cancel each other out since $v_{v_1,G} = - v_{v_2,G}$, where $v_1, v_2$ are the vertices corresponding to a bounded edge $G$. Hence, we can write the sum in $A$ as a sum over all unbounded edges $G$ in $S_p(1)$. For such a $G$ we write $v_G$ for its unique vertex. Then we get
\begin{align}\label{eq:A}
A =   \sum_{\substack{G \in S_p(1) \\ \{0\} \neq \rec(G)}} \mu(v_G)h(\rec(G))(v_G)h'_{p,v_G}(v_{v_G,G})c(G). 
\end{align}

Now, for $B$, by Lemma \ref{lem:restriction} and the definition of the pairing, we have
\begin{align*}
B =&\sum_{\sigma \in R_2}-\overline{h}_{\sigma}(v_{\sigma})(h' \cdot c)(\sigma)\\
= &\sum_{\sigma \in R_2}-\overline{h}_{\sigma}(v_{\sigma})(h'(\sigma) \cdot c^{\sigma})(\{0\}) \\
= &\sum_{\sigma \in R_2}\overline{h}_{\sigma}(v_{\sigma}) \left(\sum_{\substack{G \in S_p(1) \\ \sigma = \rec(G)}}-\mu(v_G)h'(\sigma)(v_G)c^{\sigma}(G)  +\sum_{\substack{\gamma \in \Sigma(2)\\ \sigma \prec \gamma}} -\overline{h'(\sigma)}_{\gamma}(v_{\sigma,\gamma})c^{\sigma}(\gamma)\right) \\
=& \sum_{\sigma \in R_2}\overline{h}_{\sigma}(v_{\sigma}) \left(\sum_{\substack{G \in S_p(1) \\ \sigma = \rec(G)}}-\mu(v_G)h'(\sigma)(v_G)c(G)  +\sum_{\substack{\gamma \in \Sigma(2)\\ \sigma \prec \gamma}} -\overline{h'}_{\gamma}(v_{\sigma,\gamma})c(\gamma)\right)
\end{align*}

Now, note that for $\sigma \in R_2$ and for any $G \in S_p(1)$ such that $\rec(G) = \sigma$, we have $\overline{h}_{\sigma}(v_{\sigma}) = h_{p,v_G}(v_{v_G,G})-h(\sigma)(v_G)$. Also, for $G, G' \in S_p(1)$ such that $\rec(G) + \rec(G') \in \Sigma(2)$ we have
\[
\overline{h'}_{\rec(G) + \rec(G')}\left(v_{\rec(G')}\right) = h'_{p,v_G}(v_{v_G,G'}) - h'(\rec(G'))(v_{G'})
\]
Then we can write the above sum as a sum over pairs of one-dimensional faces in the following way. 
\begin{align}\label{eq:B}
B =& \sum_{\substack{G,G' \in S_p(1) \\ \rec(G) + \rec(G') \in \Sigma(2)}} \left(h_{p,v_G}(v_{v_G,G})-h(\rec(G))(v_G)\right) \Big[  \mu(v_G)h'_{p,v_G}(v_{v_G,G})c(G)  \nonumber \\ & + \left(h'_{p, v_{G'}}(v_{v_G',G'}) - h'(\rec(G')(v_{G'})\right)c\left(\rec(G) + \rec(G')\right) \Big] \nonumber \\
=& \sum_{\substack{G \in S_p(1) \\ \{0\} \neq \rec(G)}}  -\mu(v_G)c(G)h(\rec(G))(v_G)h'_{p,v_G}(v_{v_G,G}) + Z(h,h'),
\end{align}
where $Z$ is symmetric in $h,h'$, i,e. $Z(h, h') = Z(h', h)$. 

From \eqref{eq:A} and \eqref{eq:B} we get 
\[
\left(h \cdot (h' \cdot c)\right)(\{0\}) = A+B = Z(h,h') = Z(h', h).
\]
We can now redo the above argument for $\left(h'\cdot (h \cdot c)\right)(\{0\})$ and we
obtain
\[
\left(h'\cdot (h \cdot c)\right)(\{0\}) = Z(h', h)= Z(h,h') = \left(h \cdot (h' \cdot c)\right)(\{0\}),
\]
which is what we wanted to show.

\end{itemize}
\end{proof}

\begin{lemma}\label{lem:char-min}
Let $c \in W_k(X)$ be a weight of codimension $k$. Then $c$ is a generalized Minkowski weight if and only of for all principal $h \in CaSF(\S)$ we have that $h \cdot c = 0$.
\end{lemma}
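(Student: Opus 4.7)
My plan is to prove both directions by explicit computation, reducing to a generating set for the principal Cartier divisorial support functions on $\S$. Since $X$ is contraction-free, condition~(3) of Definition~\ref{def:gen-mink} is vacuous, so I need only match up conditions~(1) and~(2) with the vanishings produced by principal test functions.

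Every principal $h \in \CaSF(\S)$ is a $\Z$-combination of $\SF(u)$ for $u \in M$ and $\SF([p] - [\infty])$ for $p \in \P^1$. By the bilinearity in Lemma~\ref{lem:prod}~(\ref{prod1}), the condition $h \cdot c = 0$ for all principal $h$ decomposes into the vanishings of $\SF(u) \cdot c$ and $\SF([p] - [\infty]) \cdot c$ separately. At a cone $\tau \in R_{k+1}$ and $h = \SF(u)$ with $u \in M(\tau) = \tau^\perp \cap M$, the recession $\bar h = u$ restricts to zero on $N_\tau$, so the correction term $\bar h_\tau(\cdots)$ in the pairing vanishes, and a direct substitution of $h(\tau) = u$ and $\bar h_\sigma(v) = \langle u, v\rangle$ yields
\[
\SF(u) \cdot c(\tau) = -\Bigl(\sum_{\substack{F \in V_k \\ \rec F = \tau}}\mu(v_F)\langle u, v_F\rangle c(F) + \sum_{\substack{\sigma \in R_k \\ \tau \prec \sigma}}\langle u, v_{\tau,\sigma}\rangle c(\sigma)\Bigr).
\]
Varying $u$ through $M(\tau)$, vanishing is equivalent to equation~\eqref{eq:cond_2.1}. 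For $h = \SF([p] - [\infty])$, the recession $\bar h = 0$ annihilates all but the vertical contribution on the slices $p$ and $\infty$, reproducing the negative of equation~\eqref{eq:cond_2.2}; vanishing for every $p \in P$ gives that equation, completing condition~(2).

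The subtlest point is condition~(1) at $F \in V_{k+1}$. For any principal $h$, the slice function $h_q$ is globally affine on $N_\Q$, so the relative slopes across the walls of $F$ all vanish and the virtual support function $h_{q,F}$ on the toric stratum $X_{\Sigma(q,F)}$ is trivial; hence $h \cdot c(F) = 0$ holds identically. To recover condition~(1), my plan is to pass to the toric strata via Lemma~\ref{lem:restriction}~(\ref{it2}): the restriction $c^{p,F}$ should be a toric Minkowski weight on $\Sigma(p,F)$ in the sense of~\cite{FS}, and the toric analog of the present lemma — in which principal toric test functions do witness every character in the stratum's character lattice — forces the toric balancing at $F$, which is precisely condition~(1). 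The main technical obstacle is to formalise this compatibility: to verify that every $m \in M(F)$ arises as a virtual support function induced by a principal element of $\CaSF(\S)$ after restriction, and that the resulting balancing equation is not silently absorbed into the correction terms of the pairing. Once this bookkeeping is carried out, both directions of the lemma follow, and the equivalence of $c \in M_k(X)$ with $h \cdot c = 0$ for all principal $h$ is established.
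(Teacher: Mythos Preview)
Your reduction to the generators $\SF(u)$ and $\SF([p]-[\infty])$ and your verification of condition~(2) at $\tau\in R_{k+1}$ are the same as the paper's. The gap lies in condition~(1) at $F\in V_{k+1}$ for the converse direction, which you yourself flag as the ``main technical obstacle''.

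Your assertion that $h_{p,F}$ is trivial for every principal $h$ is what blocks you, and it is in tension with your own hope that ``every $m\in M(F)$ arises as a virtual support function induced by a principal element''. The paper's resolution is direct and is exactly the verification you say still needs doing. Recall that $M(F)=N(F)^{\vee}$ with $N(F)=(N\oplus\Z)/\bigl((N\oplus\Z)\cap\R\,c(F)\bigr)$, so an element $m\in M(F)$ is represented by a pair $(\tilde m,\ell)\in M\times\Z$. One then takes the principal support function
\[
h=\SF(\tilde m)+\SF\bigl(\ell[p]-\ell[\infty]\bigr),
\]
whose slice at $p$ is the affine function $\langle\tilde m,\cdot\rangle+\ell$; this lifts to the linear form $(\tilde m,\ell)$ on $N\oplus\Z$ and descends to $m$ on $N(F)$. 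Computing $h\cdot c(F)$ from the definition then gives $-\sum_{G\succ F}\langle m,v_{F,G}\rangle\,c(G)$, and its vanishing is precisely condition~(1). The point you miss is the role of the extra $\Z$-coordinate: characters of the toric stratum $Z_{p,F}$ come from $M\times\Z$, not from $M$ alone, and the constant shift $\ell$ supplied by the degree-zero divisor $\ell([p]-[\infty])$ on $\P^1$ is what allows principal test functions to sweep out \emph{all} of $M(F)$ rather than only the image of $M$. No detour through Lemma~\ref{lem:restriction} is required (and that lemma is in any case stated only for weights already known to lie in $M_k(X)$).
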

\begin{proof}
Assume that $c$ is a generalized Minkowski weight and let $h \in CaSF(\S)$ be principal. We have to show that $h \cdot c = 0$. By the first part of Lemma \ref{lem:prod} it suffices to show this for the case $h= SF(u)$ for $u \in M$, and for the case $h = SF\left([p]-[\infty]\right)$, $p \in P$. 

The case $h= SF(u)$ follows easily by linearity and the definition of the pairing. 

Assume now that $h = SF\left([p]-[\infty]\right)$.

Let $\tau \in R_{k+1}$. Then we have
\begin{align}\label{eq:con2}
h \cdot c(\tau) = \sum_{\substack{F \in S_p(n-k)\\ \rec(F) = \tau}}-\mu(v_F)c(F) + \sum_{\substack{F \in S_{\infty}(n-k)\\ \rec(F) = \tau}}\mu(v_F)c(F) = 0
\end{align}
by condition (2) of a generalized Minkowski weight. 

On the other hand, for $F \in V_{k+1}$, let $\tilde{p} \in P$ be a point such that $Z_{\tilde{p},F}$ corresponds to an invariant subvariety of a toric variety. We may assume $p = \tilde{p}$. Then $h \cdot c(F) = 0$ by the definition of the pairing since $h_{p,F}= 0$ in this case.

Conversely, assume that $h \cdot c =0$ for all principal $h$. We have to show that $c$ satisfies the balancing conditions of Definition \ref{def:gen-mink}.
\begin{itemize}
\item Let $F \in V_{k+1}$ and choose a point $p \in P$ such that $Z_{p,F}$ corresponds to an invariant subvariety of a toric variety with lattice of characters $M(F)= N(F)^{\vee}$. Choose a representative $\tilde{m},$ of $m$ in $M$. Let $h = \SF(\tilde m) + \SF( [p] -[\infty])$. We compute 
\begin{align*}
0 &= -\left(h \cdot c\right)^{p,F}(\{0\}) =h_{p,F}\cdot c^{p,F}(\{0\}) \\
&= \sum_{\alpha \in \Sigma(p,F)(1)}h_{p,F}(v_{\alpha}) c^{p,F}(\alpha) \\
&= \sum_{\substack{G \in S_p(n-k) \\ F \prec G}} \left\langle m, v_{F,G}\right\rangle c(G).
\end{align*}
Hence, $c$ satisfies condition $(1)$ of a generalized Minkowski weight. 
\item Let $\tau \in R_{k+1}$ and let $m$ be any element in $M(\tau)$. Choose any representative $\tilde{m} \in M$ of $m$. Consider the principal divisor $h = \SF(\tilde{m})$. Note that we have $h|_{\tau}= 0$. We compute 
\begin{align*}
 0 & = -h \cdot c(\tau) =  \sum_{\substack{F \in V_k \\ \rec(F) = \tau}}\mu(v_F)h(v_F)c(F) + \sum_{\substack{\sigma \in R_k \\ \tau \prec \sigma}}\overline{h}_{\sigma}(v_{\tau,\sigma}) c(\sigma) \\
& = \sum_{\substack{F \in V_k \\ \rec(F) = \tau}}\mu(v_F)\langle m, v_F \rangle c(F) + \sum_{\substack{\sigma \in R_k \\ \tau \prec \sigma}}\langle m, v_{\tau,\sigma}\rangle c(\sigma).
\end{align*}

Moreover, for $h = \SF([p]-[\infty])$, then as in \eqref{eq:con2} we have 
\[
0 = -h \cdot c(\tau) =  \sum_{\substack{F \in S_p(n-k)\\ \rec(F) = \tau}}\mu(v_F)c(F) - \sum_{\substack{F \in S_{\infty}(n-k)\\ \rec(F) = \tau}}\mu(v_F)c(F).
\]
This shows that $c$ also satisfies the balancing condition (2). 
Hence $c$ is a generalized Minkowski weight. 
\end{itemize}
\end{proof}

\begin{prop}\label{prop:mink}
Let $c \in M_k(X)$ and $h \in \SF(\S)$. Then $h \cdot c \in M_{k+1}(X)$. It follows that the pairing \eqref{eq:pairing} induces a pairing 
\[
\CaSF(\S) \times M_k(X) \longrightarrow M_{k+1}(X), \; (h, c) \longmapsto h \cdot c.
\]
\end{prop}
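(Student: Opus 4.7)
The plan is to combine Lemma \ref{lem:char-min} with the commutativity established in Lemma \ref{lem:prod}(2). The construction already places $h \cdot c$ in $W_{k+1}(X)$, so what remains is to verify the balancing conditions of Definition \ref{def:gen-mink}. By Lemma \ref{lem:char-min}, this is equivalent to showing that $h' \cdot (h \cdot c) = 0$ in $W_{k+2}(X)$ for every principal $h' \in \CaSF(\S)$.

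Fix such a principal $h'$. By Lemma \ref{lem:prod}(2) applied to the generalized Minkowski weight $c \in M_k(X)$, we have $h' \cdot (h \cdot c) = h \cdot (h' \cdot c)$. Since $h'$ is principal, Lemma \ref{lem:char-min} gives $h' \cdot c = 0$ in $W_{k+1}(X)$. A direct inspection of the defining formulas for the pairing shows that pairing $h$ against the zero weight produces the zero weight in $W_{k+2}(X)$, so $h \cdot (h' \cdot c) = 0$, and therefore $h' \cdot (h \cdot c) = 0$ as required. Applying Lemma \ref{lem:char-min} once more, this time to $h \cdot c \in W_{k+1}(X)$, we conclude that $h \cdot c$ lies in $M_{k+1}(X)$.

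I expect no serious obstacle in this final step: all of the combinatorial work has already been carried out in the three preceding lemmas. Lemma \ref{lem:prod}(2) supplies the symmetry that lets us swap the order of the two pairings, while Lemma \ref{lem:char-min} plays a double role, both characterizing membership in $M_{k+1}(X)$ as vanishing against principal support functions and furnishing the analogous annihilation of $c$. The induced pairing $\CaSF(\S) \times M_k(X) \to M_{k+1}(X)$ then follows immediately by restricting the codomain of the original pairing to the subgroup of generalized Minkowski weights.
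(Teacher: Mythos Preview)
Your proof is correct and follows precisely the same route as the paper's own argument: reduce via Lemma~\ref{lem:char-min} to showing $h'\cdot(h\cdot c)=0$ for principal $h'$, swap the order using Lemma~\ref{lem:prod}(2), and then use $h'\cdot c=0$. The paper's version is simply the one-line chain $h'\cdot(h\cdot c)=h\cdot(h'\cdot c)=0$, so your write-up is just a slightly more expansive rendering of the same proof.
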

\begin{proof}
Let $h' \in \SF(S)$ be principal. By Lemma~\ref{lem:char-min} it suffices to show that $h' \cdot (h \cdot c)= 0$. By Lemma ~\ref{lem:prod} we have 
\[
h' \cdot (h \cdot c) = h \cdot (h' \cdot c) = 0,
\]
concluding the proof.
\end{proof}

The next result states that the pairing in Proposition \ref{prop:mink} is compatible with the tropicalization isomorphism \eqref{eq:tropi} from Theorem \ref{thm:trop}.
\begin{theorem}\label{th:comp}(Compatibility with tropicalization)
As before we assume that $X$ is a projective, $\Q$-factorial, rational, contraction free T-variety of complexity one. Let $h \in \CaSF(\S)$ and $D_h$ its associated Cartier divisor. We set $[D_h] \in A^1(X)_{\Q}$ for its corresponding cohomology class. Furthermore, for $z \in A^k(X)_{\Q}$ we set $c = \on{trop}(z) \in M_k(X)_{\Q}$, where $\on{trop}$ is the isomorphism \eqref{eq:tropi} from Theorem~\ref{thm:trop}. Then 
\[
\on{trop}\left([D_h] \cup z \right) = h \cdot c
\]
as generalized Minkowski weights in $M_{k+1}(X)_{\Q}$.

\end{theorem}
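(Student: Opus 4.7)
The plan is to verify the claimed equality of generalized Minkowski weights pointwise, showing $\trop([D_h]\cup z)(\gamma) = (h\cdot c)(\gamma)$ for every generator $\gamma \in V_{k+1}\cup R_{k+1}$; since $X$ is contraction-free we have $T_{k+1} = \emptyset$, so these are the only cases. The universal tool is the projection formula, which lets us push the intersection with $D_h$ onto the invariant subvariety $V_\gamma$ and replace $[D_h]$ by its restriction from Definition~\ref{def:restriction-div}.

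For $\gamma = F\in V_{k+1}$, pick $p\in P$ so that $Z_{p,F}$ is a $(k+1)$-dimensional invariant subvariety of the toric subvariety $j\colon X_{\Sigma(p,F)}\hookrightarrow X$. By Definition~\ref{def:restriction-div}, $j^*[D_h]=[D_{h_{p,F}}]$, and the projection formula gives
\[
[D_h]\cap[Z_{p,F}] \;=\; j_*\bigl([D_{h_{p,F}}]\cap[Z_{p,F}]\bigr)\quad\text{in } A_k(X).
\]
Inside the toric variety $X_{\Sigma(p,F)}$, the intersection of a T-invariant Cartier divisor with an orbit closure is classical; the Fulton--Sturmfels formula \cite{FS} yields
\[
[D_{h_{p,F}}]\cap[Z_{p,F}] \;=\; \sum_{\substack{G\in \S_p(n-k)\\ F\prec G}}-h_{p,F}(v_{F,G})\,[Z_{p,G}].
\]
Capping with $z$ and taking degrees, using $\deg(z\cap[Z_{p,G}])=c(G)$, produces exactly the defining formula for $(h\cdot c)(F)$.

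For $\gamma = \tau\in R_{k+1}$, I would first use linearity in $h$ to reduce to the case $\overline{h}|_\tau = 0$. Concretely, choose $m_\tau\in M$ with $m_\tau|_\tau = \overline{h}|_\tau$ and write $h = h_0 + \SF(m_\tau)$, so that $\overline{h_0}|_\tau = 0$. The support function $\SF(m_\tau)$ is principal, hence by Lemma~\ref{lem:char-min} we have $\SF(m_\tau)\cdot c = 0$, and on the cohomology side $[D_{\SF(m_\tau)}] = 0$ in $A^1(X)$, so $[D_{\SF(m_\tau)}]\cup z = 0$. Both sides of the asserted identity are linear in $h$ (the right-hand side by Lemma~\ref{lem:prod}), so the identity reduces to the case $\overline{h}|_\tau = 0$. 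Under this hypothesis the correction term $\overline{h}_\tau(\cdot)$ in the definition of $(h\cdot c)(\tau)$ vanishes, and both $h(\tau)_p$ on vertices and $\overline{h(\tau)}$ on rays of $\Sigma(\tau)$ descend cleanly from $h_p|_F$ and $\overline{h}_\sigma$ respectively. Using the embedding $\iota\colon B_\tau\hookrightarrow X$, the projection formula together with the Weil-divisor description~\eqref{eq:weil-div} of $D_{h(\tau)}=\iota^*D_h$ on $B_\tau$, pushed forward under $\iota$, gives
\[
[D_h]\cap [B_\tau] \;=\; -\!\!\sum_{\substack{\sigma\in R_k\\\tau\prec\sigma}}\!\overline{h(\tau)}(v_{\tau,\sigma})\,[B_\sigma] \;-\!\!\sum_{\substack{F\in V_k\\\rec(F)=\tau}}\!\mu(v_F)\,h(\tau)(v_F)\,[Z_{p,F}].
\]
Capping with $z$ and taking degrees then produces exactly the expression for $(h\cdot c)(\tau)$ in the reduced case.

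The main technical obstacle is the second case: cleanly justifying the reduction $h\mapsto h_0$ via linearity and verifying that, once $\overline{h}|_\tau = 0$, the functions $h(\tau)$ and $\overline{h(\tau)}$ coincide on the relevant vertices and rays with the corresponding evaluations of $h_p$ and $\overline{h}_\sigma$ so that the extra $\overline{h}_\tau$-term drops out. Once this bookkeeping is in place, the two computations match on every generator, and therefore the two weights agree in $M_{k+1}(X)$.
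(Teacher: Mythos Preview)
Your proposal is correct and follows essentially the same route as the paper: a pointwise verification on $V_{k+1}\cup R_{k+1}$ using the projection formula and the restriction of $D_h$ to the invariant subvarieties $Z_{p,F}$ and $B_\tau$. The only organizational difference is in the $\tau$-case: the paper appeals directly to Lemma~\ref{lem:restriction} (whose proof already contains the reduction to $\overline{h}|_\tau=0$) to identify the degree computation with $(h(\tau)\cdot c^\tau)(\tau)=(h\cdot c)^\tau(\tau)=(h\cdot c)(\tau)$, whereas you inline that reduction by subtracting the principal support function $\SF(m_\tau)$ and invoking Lemma~\ref{lem:char-min}; both are the same argument, just packaged differently.
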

\begin{proof}
Let $\omega \in R_{k+1}\bigcup V_{k+1}$. We have to show that 
\[
\trop\left([D_h] \cup z \right)(\omega) = h \cdot c (\omega).
\]
\begin{itemize}
\item Let $F \in V_{k+1}$. Choose a point $p \in P$ such that $Z_{p,F}$ corresponds to an invariant subvariety of a toric variety. Recall that $h_{p,F}$ denotes the (toric) virtual support function on the fan $\Sigma(p,F)$ corresponding to the pullback of $D_h$ to $X_{\Sigma(p,F)}$. Then 
\[
[D_h] \cap [Z_{p,F}] = \sum_{\substack{ G \in S_p(n-k) \\ F \prec G}}h_{p,F}(v_{F,G})[Z_{p,G}].
\]
Hence 
\begin{align*}
\trop\left([D_h] \cup z\right)(F) &= \deg\left(\left([D_h]\cup z \right) \cap [Z_{p,F}]\right) \\
& =\sum_{\substack{ G \in S_p(n-k) \\ F \prec G}}h_{p,F}(v_{F,G}) \deg\left(z \cap [Z_{p,G}]\right).
\end{align*}
Note that by definition we have $\deg\left(z \cap [Z_{p,G}]\right) = c^{p,F}(G)$. Then, using Lemma \ref{lem:restriction}, we obtain 
\begin{align*}
\trop\left([D_h] \cup z \right)(F) =& \sum_{\substack{ G \in S_p(n-k) \\ F \prec G}}h_{p,F}(v_{F,G}) \deg\left(Z \cap [Z_{p,G}]\right) \\
=& \sum_{\substack{ G \in S_p(n-k) \\ F \prec G}}h_{p,F}(v_{F,G}) c^{p,F}(G) \\
=& h_{p,F} \cdot c^{p,F}(F) \\
=& \left(h \cdot c\right)^{p,F}(F) \\
=& (h \cdot c)(F).
\end{align*} 
\item Let $\tau \in R_{k+1}$. By definition, we have 
\[
\trop\left([D_h] \cup z \right)(\tau) = \deg\left(\left([D_h] \cup z \right) \cap [B_{\tau}]\right).
\]
The restriction $h(\tau) \in \CaSF(\S(\tau))$ is the Cartier support function corresponding to $[D_h]\cap [B_{\tau}]$. Then 
\[
[D_h] \cap [B_{\tau}] = \left[D_{h(\tau)}\right] = - \sum_{\sigma \in R_k(\tau)}\overline{h(\tau)}_{\sigma}(v_{\tau,\sigma})[B_{\sigma}] - \sum_{\substack{F \in V_k(\tau)\\ \rec F = \tau}}\mu(v_F)h(\tau)(v_F)[Z_{p,F}].
\]
Hence the weight of $\trop\left([D_h] \cup z \right)$ at $\tau$ is 
\begin{align*}
\deg\left(\left([D_h] \cup z\right) \cap [B_{\tau}]\right) = &- \sum_{\sigma \in R_k(\tau)}\overline{h(\tau)}_{\sigma}(v_{\tau, \sigma}) \deg \left(z \cap [B_{\sigma}]\right) \\ &- \sum_{\substack{F \in V_k(\tau)\\ \rec F = \tau}}\mu(v_F)h(\tau)(v_F)\deg\left(z \cap [Z_{p,F}]\right).
\end{align*}
On the other hand we have 
\[
\deg \left(z \cap [B_{\sigma}]\right) = c^{\tau}(\sigma) \quad \text{and} \quad \deg\left(z \cap [Z_{p,F}]\right)=c^{\tau}(F).
\]
 Hence, using Lemma \ref{lem:restriction} we get
\begin{align*}
\trop\left([D_h] \cup c \right)(\tau) =&  - \sum_{\sigma \in R_k(\tau)}\overline{h(\tau)}_{\sigma}(v_{\tau, \sigma}) \deg \left(z \cap [B_{\sigma}]\right) \\ &- \sum_{\substack{F \in V_k(\tau)\\ \rec F = \tau}}\mu(v_F)h(\tau)(v_F)\deg\left(z \cap [Z_{p,F}]\right)\\
=&  - \sum_{\sigma \in R_k(\tau)}\overline{h(\tau)}_{\sigma}(v_{\tau, \sigma}) c^{\tau}(\sigma) - \sum_{\substack{F \in V_k(\tau)\\ \rec F = \tau}}\mu(v_F)h(\tau)(v_F)c^{\tau}(F) \\
=& \left(h(\tau) \cdot c^{\tau}\right)(\tau) \\
=& \left(h \cdot c\right)^{\tau}(\tau)\\
=& \left(h \cdot c\right)(\tau).
\end{align*}

\end{itemize}
\end{proof}
\section{The measure associated to an invariant Cartier divisor and top intersection numbers}\label{sec:measure}
Let $X$ be a $(n+1)$-dimensional complete, rational T-variety of complexity one with divisorial fan $\S$ over $Y= \P^1$ and recession fan $\Sigma$ in $N_{\Q}$. As in the last section, we assume that $X$ is projective, $\Q$-factorial and contraction free.

As before, for $p \in \P^1$ we write $\S_p$ for the corresponding complete polyhedral subdivision of $N_{\Q}$ and $P$ for the finite set of points of $\P^1$ such that $\S_p \neq \Sigma$. $P$, which is assumed to have size at least two. 

The goal of this section is to define a (discrete) Measure $\mu_h$ associated to a Cartier divisorial Support function $h \in \CaSF(\S)$ based on the combinatorial intersection pairing \eqref{eq:pairing} and to compute the top intersection number $D_h^{n+1}$ of the corresponding invariant divisor $D_h$ in terms of this measure. 

let $[X] \in A_{n+1}(X)$ denote the top Chow homology  class. By Poincaré duality and Theorem \ref{thm:trop}, we may consider the associated Minkowski weight $c_{[X]} \in M_0(X)$. This is given by 
\[
c_{[X]}\colon V_0 \longrightarrow \Z, \quad F \longmapsto 1.
\]
\begin{Def}\label{def:measure}
Let $h \in \CaSF(\S)$. 
The \emph{Total space} $\on{Tot}(\S)$ of the divisorial fan $\S$ is the real vector space 
\[
\on{Tot}(\S) \coloneqq \bigoplus_{p \in P}N_{\R} \bigoplus N_{\R}.
\]

Then to $h$ we associate a discrete measure $\mu_h$ on $\on{Tot}(\S)$ given in the following way. Consider the codimension $n$ generalized Minkowski weight defined inductively by 
\[
h^n \cdot c_{[X]} =  h \cdot \left(h^{n-1} \cdot c_{[X]}\right).
\]
Then $\mu_h$ is the measure supported on the set
\[
V_n \bigcup R_n,
\]
which is in bijection with the set $\bigsqcup_{p \in P}\S_p(0) \sqcup \bigsqcup_{\tau \in \Sigma(1)}v_{\tau}$.

The weights on $V_n$ are given by 
\[
F \longmapsto \mu(v_F)\left(h^n \cdot c_{[X]}(F)\right);
\]
and the weights on $R_n$ are given by 
\[
\tau \longmapsto h^n \cdot c_{[X]}(\tau).
\]
Note that the measure $\mu_h$ is positive if $D_h$ is nef. 
\end{Def}
We are now able to compute top intersection numbers of invariant Cartier divisors on $X$ in terms of their associated measures. 
\begin{cor}\label{cor:top-int}
Let $D_h$ be a T-invariant Cartier divisor on $X$ associated to $h \in \CaSF(\S)$ and let $\rec(h) = \overline{h}$ be its recession function.
\begin{enumerate}
\item The top intersection number $D_h^{n+1}$ can be computed inductively by 
\[
h^{n+1} \cdot c_{[X]}(\{0\}) = h^n\cdot \left(h \cdot c_{[X]}\right)(\{0\}).
\]
\item
Moreover, it can be computed as an integral with respect to the measure $\mu_h$ 
\[
D_h^{n+1}= \int_{\on{Tot}(\S)}-h\mu_h = \sum_{p \in P}\sum_{F \in \S_p(0)}-h(v_F)\mu_h(F)+ \sum_{\tau \in \Sigma(1)}-\overline{h}(v_{\tau})\mu_h(\tau).
\]
\end{enumerate}
\end{cor}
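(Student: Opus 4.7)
The plan is to derive both parts by iterating Theorem~\ref{th:comp} to lift the algebraic intersection to the combinatorial pairing, and then unwinding the definition of that pairing at the zero cone.

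For part~(1), the equality $h^{n+1}\cdot c_{[X]}(\{0\}) = h^{n}\cdot(h\cdot c_{[X]})(\{0\})$ is a direct associativity statement. Starting from the inductive definition $h^{n+1}\cdot c_{[X]} = h\cdot(h^{n}\cdot c_{[X]})$, I apply the symmetry $h\cdot(h'\cdot c) = h'\cdot(h\cdot c)$ from Lemma~\ref{lem:prod}\ref{prod2} repeatedly to commute the outer factor of $h$ past the inner factors one at a time, until it lands immediately next to $c_{[X]}$. This moves one $h$ across each of the $n$ factors in the iterated pairing and produces $h^n\cdot(h\cdot c_{[X]})$.

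For part~(2), the first step is to identify $D_h^{n+1}$ with the value $(h^{n+1}\cdot c_{[X]})(\{0\})$. Under the isomorphism $\on{trop}$ of Theorem~\ref{thm:trop}, the weight $c_{[X]}\in M_0(X)$ corresponds to the multiplicative unit $1\in A^0(X)$, since Kroenecker duality sends $1$ to the degree map and $\deg(1\cap [V_\gamma]) = 1$ on every top-dimensional generator $\gamma\in V_0$. Iterating Theorem~\ref{th:comp} with $z = 1, [D_h], [D_h]^2,\ldots$ yields
\[
\on{trop}\bigl([D_h]^{n+1}\bigr) = h^{n+1}\cdot c_{[X]}
\]
in $M_{n+1}(X)$. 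Because $X$ is contraction-free and the dimensions force $V_{n+1} = T_{n+1} = \emptyset$, the unique element of the index set on which codimension $n+1$ weights are defined is the zero cone $\{0\}\in R_{n+1}$, with associated subvariety $B_{\{0\}} = X$. Applying the definition of $\on{trop}$ then gives $(h^{n+1}\cdot c_{[X]})(\{0\}) = \deg([D_h]^{n+1}\cap [X]) = D_h^{n+1}$.

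The last step is to unfold $(h\cdot(h^n\cdot c_{[X]}))(\{0\})$ using the explicit formula~\eqref{eq:pairing} for the pairing at a ray $\tau\in R_{k+1}$, specialized to $\tau = \{0\}$. Since $\overline{h}|_{\{0\}} = 0$, the third ``correction'' term in that formula vanishes identically. Using that $B_{\{0\}} = X$ so $h(\{0\}) = h$, and that $v_F = F$ for a vertex $F\in V_n$, what remains is exactly
\[
(h\cdot(h^n\cdot c_{[X]}))(\{0\}) = \sum_{F\in V_n}-\mu(v_F)\,h(v_F)\,(h^n\cdot c_{[X]})(F) \;+\; \sum_{\tau\in R_n}-\overline{h}(v_\tau)\,(h^n\cdot c_{[X]})(\tau).
\]
Substituting the definitions $\mu_h(F) = \mu(v_F)(h^n\cdot c_{[X]})(F)$ and $\mu_h(\tau) = (h^n\cdot c_{[X]})(\tau)$ rewrites this as the integral $\int_{\on{Tot}(\S)}-h\,\mu_h$ claimed in the statement. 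The argument is essentially bookkeeping once Theorem~\ref{th:comp} and Lemma~\ref{lem:prod} are in hand; the only subtle steps to verify carefully are the identification of $c_{[X]}$ with the tropicalization of the cohomological unit, the reduction $B_{\{0\}} = X$ (where the contraction-free hypothesis enters), and the vanishing of the $\overline{h}_{\{0\}}$ term.
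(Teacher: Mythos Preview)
Your proof is correct and follows essentially the same route as the paper's: iterate Theorem~\ref{th:comp} to identify $D_h^{n+1}$ with $(h^{n+1}\cdot c_{[X]})(\{0\})$, then unfold the pairing and the definition of~$\mu_h$. One small remark: your appeal to Lemma~\ref{lem:prod}\ref{prod2} for part~(1) is unnecessary, since both sides of the displayed equality unwind by Definition~\ref{def:measure} to the same $(n{+}1)$-fold nested expression $h\cdot(h\cdot(\cdots(h\cdot c_{[X]})\cdots))$; the substantive content of part~(1) is the identification $D_h^{n+1} = (h^{n+1}\cdot c_{[X]})(\{0\})$, which you correctly establish via Theorem~\ref{th:comp} in your second paragraph.
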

\begin{proof}
The first part follows from Theorem \ref{th:comp} and the second from the definition of the measure $\mu_h$  and the intersection pairing.
\end{proof}
\begin{exa}\label{exa:top}
Consider the Hirzebruch surface $\H_2= \on{Proj}\left(\O_{\P^1} \oplus \O_{\P^1}(2)\right)$. It is a toric variety whose corresponding fan is given in the following Figure \ref{fig:fan-h2} 
\begin{figure}[H]
\begin{center}
\begin{tikzpicture}[scale = 1.3]
		\draw  (-1.5,0) -- (1.5,0);
		\draw (0,-1.5) -- (0,1.5);
		\draw (0,0) -- (-1,2) node[above]{$(-1,2)$};
	\end{tikzpicture}\caption{Fan for $\H_2$}\label{fig:fan-h2}
	\end{center}
\end{figure}
We can perform a toric downgrade and consider $\H_2$ as a $T$-surface, where $T$ is a one-dimensional torus, as is done in \cite[Example 2.9]{PS}. Here one considers the embedding $T \hookrightarrow T_{\H_2}$ corresponding to the exact sequence
\[
0 \longrightarrow N \xrightarrow{\begin{pmatrix}
1\\ 0
\end{pmatrix}} N \oplus \Z 
\xrightarrow{\begin{pmatrix}
0 & 1 
\end{pmatrix}}\Z \longrightarrow 0.
\]
 This yields the divisorial fan $\S= \left(\S_p\right)_p$ with recession fan $\Sigma$ depicted in the following Figure~\ref{fig:poly-fan-h2}.

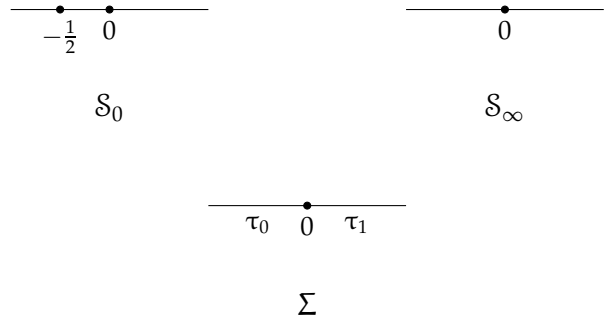
\begin{figure}[H]
\begin{center}
\begin{tikzpicture}[scale = 1.3]
		\draw  (-3,1) -- (-1,1);
		\draw (1,1) -- (3,1);
		\filldraw (-2.5,1) circle (1pt) node[below]{\footnotesize{$-\frac{1}{2}$}};
		\filldraw (-2,1) circle (1pt)node[below]{\footnotesize{$0$}};
		\filldraw (2,1) circle (1pt)node[below]{\footnotesize{$0$}};
		\draw (-2,0) node{$\S_0$};
		\draw (2,0) node{$\S_{\infty}$};
		\draw (-1,-1) -- (1,-1);
		\filldraw (0,-1) circle (1pt)node[below]{\footnotesize{$0$}};
		\draw (-0.5,-1) node[below]{\footnotesize{$\tau_0$}};
		\draw (0.5,-1) node[below]{\footnotesize{$\tau_1$}};
		\draw (0,-2) node{$\Sigma$};
	\end{tikzpicture}\caption{Divisorial fan for $\H_2$ as a $\C^*$-surface}\label{fig:poly-fan-h2}
	\end{center}
\end{figure}
We consider the T-invariant Cartier divisor $D_h$ on $\H_2$ from \cite[Example~3.24]{PS}. This is a very ample divisor whose corresponding Cartier divisorial support function $h= (h_p)_p$ on $\S$ with recession function $\rec(h) = \overline{h}$ is given in the following figure \ref{fig:supp-fct}.
\begin{figure}[H]
\begin{center}
\begin{tikzpicture}[scale = 1.3]
\draw (-3,2) node{$h_0$};
\draw (1.5,2) node{$h_{\infty}$};
\draw (-0.5,-0.5) node{$\overline{h}$};
		\draw  (-3,1) -- (-1,1);
		\draw (1,1) -- (3,1);
		\filldraw (-2.5,1) circle (1pt) node[below]{\footnotesize{$-\frac{1}{2}$}};
		\filldraw (-2,1) circle (1pt)node[below]{\footnotesize{$0$}};
		\filldraw (2,1) circle (1pt)node[below]{\footnotesize{$0$}};
		\draw (-2,0) node{$\S_0$};
		\draw (2,0) node{$\S_{\infty}$};
		\draw (-1,-1.5) -- (1,-1.5);
		\filldraw (0,-1.5) circle (1pt)node[below]{\footnotesize{$0$}};
		\draw (-0.5,-1.5) node[below]{\footnotesize{$\tau_0$}};
		\draw (0.5,-1.5) node[below]{\footnotesize{$\tau_1$}};
		\draw (0,-2.5) node{$\Sigma$};
		
		\draw (-3,1) node[above]{\footnotesize{$u_2+1$}};
		\draw (-2.25,1) node[above]{\footnotesize{$u_1$}};
		\draw (-1.5,1) node[above]{\footnotesize{$u_0$}};
		
		\draw (1.5,1) node[above]{\footnotesize{$u_2-1$}};
		\draw (2.5,1) node[above]{\footnotesize{$u_0-1$}};
		
		\draw (-0.5,-1.5) node[above]{\footnotesize{$u_2$}};
		\draw (0.5,-1.5) node[above]{\footnotesize{$u_0$}};
	\end{tikzpicture}\caption{Divisorial support function $h$}\label{fig:supp-fct}
	\end{center}
\end{figure}
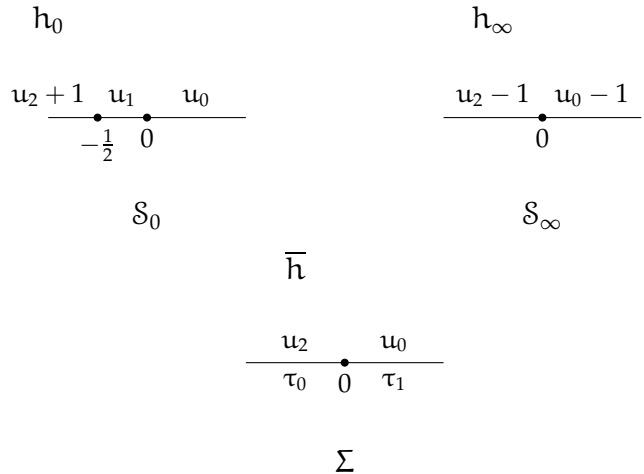
Here, the $u_i \in M$ are given by $u_0\equiv 0$, $u_1 \colon x \mapsto x $, $u_2 \colon x \mapsto 3x$. 

We denote by $v_{0,-\frac{1}{2}}$, $v_{0,0}$ the vertices in $\S_0$ and by $v_{\infty,0}$ the vertex in $\S_{\infty}$. From \eqref{eq:weil-div} we obtain that the Weil divisor $D_h$ corresponding to $h$ can be written as
\[
D_h = 3B_{\tau_0}+Z_{0,-\frac{1}{2}} + Z_{\infty, 0}.
\]
Here, $B_{\tau_0}$ denotes the horizontal divisor corresponding to $\tau_0 \in \Sigma(1)$ and $Z_{0,-\frac{1}{2}}$, respectively $Z_{\infty, 0}$ the vertical divisors corresponding to the vertices $v_{0,-\frac{1}{2}}$ and $v_{\infty,0}$ in $\S_0$ and $\S_{\infty}$, respectively.

It follows from \cite[Proposition 3.31]{PS} that the self-intersection number of $D_h$ can be computed as 
\[
D_h^2 = 2\cdot \on{vol}(h^*),
\]
where $h^*$ is the dual function of $h$, and $\on{vol}$ is some well-defined volume function (we refer to \cite[Section 3]{PS} for details). An easy calculation using the description of $h^*$ given in Figure 3 of \emph{loc.~cit.} and the definition of $\on{vol}$ gives 
\[
D_h^2 = 2\cdot(1+1+3) = 10.
\]
We will now compute $D_h^2$ using Proposition \ref{cor:top-int}. First we compute $c_1 \coloneqq  h \cdot c_{[\H_2]}$.
Using the definition of the intersection pairing in Section \ref{sec:intersect}, we get that $c_1$ is the generalized Minkowski weight on $V_1 \cup R_1$ with values on $V_1$ given by
\[
c_1\left(v_{0,-\frac{1}{2}}\right) = 1, \;c_1\left(v_{0,0}\right) = 1, \; c_1\left(v_{\infty,0}\right) = 3
\]
and with values on $R_1$ given by
\[
c_1\left(\tau_1\right) = 3, \; c_1\left(\tau_0\right) = 2.
\]
It is easy to check that $c_1$ satisfies the balancing conditions from Definition~\ref{def:gen-mink}. Indeed, we only have to consider condition $(2)$ (since conditions $(1)$ and $(3)$ are empty). 

Let $\tau = \{0\} \in \Sigma(0)$. For the first part of condition $(2)$ (Equation \eqref{eq:cond_2.1})  we compute 
\begin{align*}
&\sum_{F \in \S_p(0)}\mu(v_F)v_Fc_1(F) + \sum_{\tau \in \Sigma(1)}v_{\tau}c_1(\tau) \\
 &= -c_1\left(v_{0,-\frac{1}{2}}\right) + c_1\left(\tau_1\right)-c_1\left(\tau_0\right) \\
&= -1+3-2=0.
\end{align*}

Now, to check the second part of condition $(2)$ (Equation \eqref{eq:cond_2.2}) we compute
\begin{align*}
&\sum_{F\in \S_0(0)} \mu(v_F)c_1(F) = 2\cdot c_1\left(v_{0,-\frac{1}{2}}\right) + 1\cdot c_1\left(v_{0,0}\right) = 3 = 1 \cdot c_1\left(v_{\infty,0}\right) =\sum_{F\in \S_{\infty}(0)} \mu(v_F)c_1(F).
\end{align*}
Hence $c_1$ satisfies the balancing conditions from Definition~\ref{def:gen-mink}.

By Proposition \ref{cor:top-int} we get 
\begin{align*}
D_h^2 &= \int_{\on{Tot}(\S)}h\mu_h \\
&= \sum_{F \in \S_p(0)}-h(v_F)\mu_h(F) + \sum_{\tau \in \Sigma(1)}-h_{\tau}(v_{\tau})\mu_h(\tau) \\
&=c_1\left(v_{0,-\frac{1}{2}}\right) +  c_1\left(v_{\infty,0}\right) + 3 \cdot  c_1\left(\tau_0\right) \\
&= 1+3+3\cdot 2 = 10,
\end{align*}
which, as we have seen, is compatible with the results of \cite{PS}.
\end{exa}

\printbibliography

%
%
%
%

\end{document}